\documentclass[a4paper,10pt,twoside,notitlepage]{amsart}

\usepackage[T1]{fontenc}
\usepackage[utf8]{inputenc}
\usepackage{lmodern}
\usepackage{amsmath,amssymb,amsthm,mathtools,mathrsfs}
\usepackage[left=28mm,right=28mm,top=26mm,bottom=26mm]{geometry}
\usepackage[expansion=false]{microtype}
\usepackage[unicode,hidelinks]{hyperref}
\usepackage{enumitem}
\usepackage{comment}

\theoremstyle{plain}
\newtheorem{thm}{Theorem}[section]
\newtheorem{prop}[thm]{Proposition}
\newtheorem{lem}[thm]{Lemma}
\newtheorem{cor}[thm]{Corollary}
\theoremstyle{definition}
\newtheorem{definition}[thm]{Definition}
\theoremstyle{remark}
\newtheorem{rmk}[thm]{Remark}
\numberwithin{equation}{section}

\newcommand{\R}{\mathbb R}

\newcommand{\p}{\partial}
\newcommand{\Mi}{M^{\mathrm{int}}}
\newcommand{\Mji}[1]{M_{#1}^{\mathrm{int}}}
\newcommand{\wt}{\widetilde}
\newcommand{\Id}{\operatorname{Id}}

\newcommand{\Hess}{\operatorname{Hess}}
\newcommand{\tr}{\operatorname{tr}}
\newcommand{\grad}{\operatorname{grad}}
\newcommand{\divg}{\operatorname{div}}

\newcommand{\tf}{\mathrm{tf}}
\newcommand{\norm}[1]{\lVert #1\rVert}

\newcommand{\pair}[2]{\langle #1,#2\rangle}
\newcommand{\dd}{\,\mathrm d}
\newcommand{\Egam}{C_c^\infty(\Gamma;\R)}
\newcommand{\para}[1]{\vspace{3mm}\noindent\textbf{#1.}}
\setlist[enumerate]{itemsep=3pt,topsep=5pt}
\title[Conformal Poisson embedding method]{The conformal Poisson embedding method for the anisotropic Calder\'on problem with a potential}
\author[C. I. C\^arstea]{C\u{a}t\u{a}lin I. C\^arstea}
\address[C. I. C\^arstea]{Department of Applied Mathematics, National Yang Ming Chiao Tung University, Hsinchu 30050, Taiwan}
\email{catalin.carstea@gmail.com}
\author[T. Liimatainen]{Tony Liimatainen}
\address[T. Liimatainen]{Department of Mathematics and Statistics, University of Jyv\"askyl\"a, Jyv\"askyl\"a, Finland}
\email{tony.t.liimatainen@jyu.fi}
\author[Y.-H. Lin]{Yi-Hsuan Lin}
\address[Y.-H. Lin]{Department of Applied Mathematics, National Yang Ming Chiao Tung University, Hsinchu, Taiwan}
\curraddr{Fakult\"at f\"ur Mathematik, Universit\"at Duisburg-Essen, Essen, Germany}
\email{yihsuanlin3@gmail.com}
\author[W. Trad]{William Trad}
\address[W. Trad]{Department of Mathematics and Statistics, University of Jyv\"askyl\"a, Jyv\"askyl\"a, Finland}
\email{tradwing@jyu.fi}
\keywords{Anisotropic Calder\'on problem, Schr\"odinger equation, Poisson embedding, conformal gauge, partial boundary data, Runge approximation}
\subjclass[2020]{35R30, 35J10, 53C18}
\date{}
\hypersetup{
 pdftitle={The conformal Poisson embedding method for the anisotropic Calderon problem with a potential},
 pdfauthor={C\u{a}t\u{a}lin I. C\^arstea; Tony Liimatainen; Yi-Hsuan Lin; William Trad},
 pdfsubject={Revised manuscript},
 pdfkeywords={Calderon problem, Schrodinger equation, partial data, Poisson embedding, conformal gauge}
}

\begin{document}
\begin{abstract}
We prove that the local Dirichlet-to-Neumann map of $\Delta_g+q$ determines a compact connected real-analytic Riemannian manifold of dimension at least three and a real-analytic real-valued potential, up to a diffeomorphism fixing the observed boundary and the conformal gauge of the equation. The result is new even when \(q\) is constant, i.e. for the fixed-frequency (Helmholtz) equation \(\Delta_g u+\lambda u=0\). The measurements are restricted to an arbitrary nonempty open boundary set $\Gamma$. The proof develops conformal  Poisson embeddings method and uses coordinates given by quotients of solutions.  



\end{abstract}
\maketitle

	\section*{Disclaimer}
	This is a preliminary version, released rapidly because of the urgent pace of AI-driven developments in the field. The main results, arguments, and statements of this paper were developed without the use of AI. AI tools were used for auxiliary mathematical discussions and for assistance with parts of the writing. The authors take full responsibility for the final mathematical statements and proofs. The authors will improve the presentation in an updated version following this initial release.

\section{Introduction}\label{sec:introduction}

We consider the inverse problem of recovering a real analytic Riemannian metric $g$ and a real analytic potential $q$ from the Dirichlet-to-Neumann map of $\Delta_g+q$. The measurements are restricted to an arbitrary nonempty open subset $\Gamma$ of the boundary. We prove uniqueness up to a diffeomorphism fixing $\Gamma$ and the conformal gauge of the equation.

\subsection{The boundary value problem}
Let $(M,g)$ be a compact connected Riemannian manifold with nonempty smooth boundary, of dimension $n\geq3$, and let $q\in C^\infty(M;\R)$. We use the nonnegative sign convention for the Laplace--Beltrami operator:
\begin{equation}\label{eq:laplace_convention}
 \Delta_g u=-\divg_g\grad_g u
 =-|g|^{-1/2}\p_a\bigl(|g|^{1/2}g^{ab}\p_bu\bigr).
\end{equation}
Thus $\Delta_g u=-\tr_g\Hess_g u$. Throughout the paper we assume that zero is not a Dirichlet eigenvalue of
\[
 L_{g,q}:=\Delta_g+q.
\]
Equivalently, $L_{g,q}:H^2(M)\cap H^1_0(M)\to L^2(M)$ is invertible. By the Fredholm alternative and elliptic regularity, for every $f\in C^\infty(\p M;\R)$ the Dirichlet problem
\begin{equation}\label{eq:dirichlet}
 \begin{cases}
 L_{g,q}u_f=0&\text{in }\Mi,\\
 u_f=f&\text{on }\p M
 \end{cases}
\end{equation}
has a unique solution $u_f\in C^\infty(M;\R)$. Here and below smoothness on a manifold with boundary means smoothness up to the boundary. The Dirichlet-to-Neumann map is
\[
 \Lambda_{g,q}f=\p_{\nu_g}u_f\big|_{\p M},
\]
where $\nu_g$ is the outward unit normal. The same map is bounded from $H^{1/2}(\p M)$ to $H^{-1/2}(\p M)$.

For a nonempty open subset $\Gamma\subset\p M$, define the local Dirichlet-to-Neumann map by
\begin{equation}\label{eq:local_DN}
 \Lambda_{g,q}^{\Gamma}f
 =\p_{\nu_g}u_f\big|_\Gamma,
 \quad f\in\Egam.
\end{equation}
The boundary datum is extended by zero outside $\Gamma$ before solving \eqref{eq:dirichlet}. Thus both the Dirichlet inputs and the Neumann observations are restricted to the same set $\Gamma$. No regularity of the relative boundary of $\Gamma$ is assumed.

\subsection{The gauge and the main result}
The equation $L_{g,q}u=0$ has a conformal gauge in addition to the usual diffeomorphism gauge; see, for example, \cite[Proposition~8.2]{DosSantosFerreira2009}. We set
\begin{equation}\label{eq:p_exponent}
 p=\frac{2n}{n-2},\quad p-2=\frac4{n-2},\quad p-1=\frac{n+2}{n-2}.
\end{equation}
For a positive smooth function $c$, define
\begin{equation}\label{eq:gauge}
 \wt g=c^{p-2}g,
 \quad
 \wt q=c^{1-p}(\Delta_gc+qc).
\end{equation}
Then
\begin{equation}\label{eq:covariance_intro}
 L_{\wt g,\wt q}(c^{-1}u)=c^{1-p}L_{g,q}u.
\end{equation}
In particular, if $c|_\Gamma=1$ and $\p_{\nu_g}c|_\Gamma=0$, this transformation leaves the local DN map unchanged. A diffeomorphism of $M$ that fixes $\Gamma$ pointwise also leaves these measurements unchanged. The verification, including the boundary normalization, is given in Proposition~\ref{hidden_conformal_invariance}.

For a manifold with an identified boundary portion $\Gamma$, write
\[
 M^\Gamma=\Mi\cup\Gamma.
\]
When two manifolds have mutual boundary, we mean that an identification of their boundaries has been fixed. In the analytic setting this identification is assumed real analytic. All boundary functions and local DN maps are compared using this identification.

\begin{thm}\label{thm:main}
Let $(M_j,g_j)$, $j=1,2$, be compact connected real analytic Riemannian manifolds of the same dimension $n\geq3$, with mutual real analytic boundary. Let $q_j\in C^\omega(M_j;\R)$, and assume that zero is not a Dirichlet eigenvalue of $L_{g_j,q_j}$. Real analyticity of $g_j$ and $q_j$ is assumed up to the boundary. Let $\Gamma$ be a nonempty open subset of the mutual boundary. If
\begin{equation}\label{eq:DN_equality}
 \Lambda_{g_1,q_1}^{\Gamma}f=\Lambda_{g_2,q_2}^{\Gamma}f
 \quad\text{for every } f\in\Egam,
\end{equation}
then there are a real analytic diffeomorphism
\[
 J:M_1^\Gamma\longrightarrow M_2^\Gamma
\]
and a positive function $c\in C^\omega(M_1^\Gamma)$ such that
\begin{equation}\label{eq:boundary_normalization}
 J|_\Gamma=\Id,
 \quad c|_\Gamma=1,
 \quad \p_{\nu_{g_1}}c|_\Gamma=0,
\end{equation}
and
\begin{equation}\label{eq:main_gauge}
 J^*g_2=c^{p-2}g_1,
 \quad
 J^*q_2=c^{1-p}(\Delta_{g_1}c+q_1c).
\end{equation}
Moreover, the corresponding Dirichlet solutions satisfy
\begin{equation}\label{eq:main_solution_relation}
 u_f^1=c\,J^*u_f^2
 \quad\text{on }M_1^\Gamma,
 \quad f\in\Egam.
\end{equation}
The pair $(J,c)$ with these properties is unique.
\end{thm}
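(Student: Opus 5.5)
The plan follows the Poisson embedding strategy of Lassas--Uhlmann and Lassas--Taylor--Uhlmann, with two modifications. First, functions are replaced by ratios of solutions, which removes the conformal gauge. Second, the weight \(c\) is recovered at the end from a single solution.

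\textbf{Step 1: boundary determination and extension.} From \eqref{eq:DN_equality}, the boundary determination for \(\Lambda^\Gamma_{g,q}\) (the full symbol of the DN map) recovers the Taylor series of \(g\) and \(q\) at \(\Gamma\), modulo boundary normal coordinates and the conformal gauge. Using a gauge \(c\) with \(c|_\Gamma=1\), \(\p_\nu c|_\Gamma=0\) (Proposition~\ref{hidden_conformal_invariance}), we can normalize so that the jets agree. By analyticity we then glue a common collar \(U\) to both \(M_j\) across a small piece of \(\Gamma\). This gives extended manifolds \(\wt M_j=M_j\cup U\). A standard Green identity argument shows that the Green functions \(G_j(x,y)\) of \(L_{g_j,q_j}\) on \(\wt M_j\) coincide for \(x,y\in U\). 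Equivalently, the Dirichlet solutions agree on the collar side, and the Cauchy data of \(u_f^1\) and \(u_f^2\) coincide on \(\Gamma\).

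\textbf{Step 2: the conformal Poisson embedding.} Define
\[
\Phi_j:M_j^\Gamma\to \mathcal D'(U),\qquad \Phi_j(x)=G_j(x,\cdot)|_U .
\]
This map is injective and an immersion. The proof is by Runge approximation (unique continuation for \(L_{g_j,q_j}\)), which shows that solutions of the form \(G_j(\cdot,y)\), \(y\in U\), have dense span, so they separate points and tangent vectors. The raw images \(\Phi_1(M_1^\Gamma)\) and \(\Phi_2(M_2^\Gamma)\) need not coincide, because of the gauge: a gauge change multiplies \(G\) by \(c(x)\) in \(x\). We therefore pass to the projectivized map \(x\mapsto[\Phi_j(x)]\), or concretely use coordinates given by quotients \(G_j(x,y_k)/G_j(x,y_0)\) for suitably chosen \(y_0,\dots,y_n\in U\). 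These quotients are invariant under \eqref{eq:covariance_intro} up to diffeomorphism.

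Near \(\Gamma\) the quotients agree by Step 1. Both sides are real analytic in \(x\), so we continue along paths. Let \(\mathcal O\) be the maximal connected open set on which the map
\[
J=[\Phi_2]^{-1}\circ[\Phi_1]
\]
is defined and analytic. An open--closed argument shows \(\mathcal O=M_1^\Gamma\). Openness comes from the local quotient coordinates. Closedness uses compactness and the fact that the distributions \(G_j(x,\cdot)|_U\) stay away from zero, hence the projectivization is well defined and injective up to the boundary minus \(\overline{\p M\setminus\Gamma}\). The main obstacle is this closedness step, where one must prevent \(J(x_k)\) from escaping to \(\p M_2\setminus\Gamma\) or degenerating. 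I would try to handle it by using the symmetry of the roles of \(j=1,2\) together with properness of \([\Phi_j]\) on \(M_j^\Gamma\).

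\textbf{Step 3: recovering \(c\) and the gauge relations.} Set \(c(x)=G_1(x,y_0)/J^*G_2(\cdot,y_0)(x)\), which is positive after choosing \(y_0\) and shrinking as needed, or alternatively after checking that this ratio is independent of \(y_0\). By construction, \(G_1(x,y)=c(x)\,G_2(J(x),y)\) for all \(y\in U\). Applying \(L_{g_1,q_1}\) in \(x\), together with \eqref{eq:covariance_intro} and density of \(\{G_j(\cdot,y)\}\), shows that \(c^{1-p}L_{g_1,q_1}c^{\,\cdot}\) and \(J^*L_{g_2,q_2}\) have the same kernel on a rich family of solutions. Comparing principal symbols gives \(J^*g_2=c^{p-2}g_1\), and comparing zeroth order terms then gives \eqref{eq:main_gauge}.

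The relation \eqref{eq:main_solution_relation} follows by representing \(u_f\) through boundary layers on \(\Gamma\) of \(G_j\). The normalization \eqref{eq:boundary_normalization} is inherited from Step 1. Uniqueness of \((J,c)\) holds because \(J\) is forced by the quotients \(u^1_{f}/u^1_{h}=J^*(u^2_f/u^2_h)\) for all \(f,h\), which separate points; \(c\) is then forced by \eqref{eq:main_solution_relation}.
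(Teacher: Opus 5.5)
Your overall architecture is sound: gauge-normalized boundary jets, scale-free quotient coordinates, analytic continuation by an open--closed argument, then coefficient recovery. But the step you flag as the main obstacle is exactly where a new idea is needed, and the remedy you suggest does not supply it.

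Suppose $x_k\to x\in\Mji1$ while $y_k=J(x_k)\to y\in\p M_2\setminus\Gamma$. In the gauge-free case one gets a contradiction from $\Phi_2(y_k)\to0\neq\Phi_1(x)$. After projectivizing, this only tells you that the multiplier $c(x_k)$ blows up. Since $G_2(\cdot,w)$ vanishes on $\p M_2$ near $y$, one has $G_2(y_k,\cdot)=t_k\bigl(\p_tG_2(y,\cdot)+o(1)\bigr)$ on $U$, where $t$ is the inward normal distance. So $[\Phi_2(y_k)]$ converges to the nonzero class $[\p_tG_2(y,\cdot)|_U]$. Nothing in the projective picture forbids $[\Phi_1(x)]=[\p_tG_2(y,\cdot)|_U]$.

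Properness of $[\Phi_2]$ helps only if this limit class is already known to lie in the image of $[\Phi_2]$, which is what you are trying to prove. Symmetry gives nothing either. Seen from $M_2$, the configuration is just a sequence tending to the unobserved boundary whose images converge in the interior of $M_1$. The closedness argument for $J^{-1}$ never needs to exclude that.

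Two smaller points. First, with $\Phi_j(x)=G_j(x,\cdot)|_U$ on $\wt M_j$, points of $\Gamma$ away from the glued piece lie on $\p\wt M_j$, where $\Phi_j=0$. So your projectivization is not defined on all of $M_j^\Gamma$. Second, Green functions on $\wt M_j$ require $0$ not to be a Dirichlet eigenvalue there, which is not automatic for a potential without sign. The paper avoids both by using $P(x)[f]=u_f(x)$ with $f\in\Egam$, for which $P(y)=\delta_y$ on $\Gamma$.

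The paper closes the gap by reversing your Steps 2 and 3. The coefficient recovery is proved first, on the already constructed domain, and then used inside the closedness step.

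\begin{enumerate}
\item Using exactly realized second-order jets (Proposition~\ref{runge_prescribed_jet}), it shows $c^{-1}L_{g_1,q_1}c=\mu L_{J^*g_2,J^*q_2}$. The principal part only gives $J^*g_2=\mu g_1$. The first-order part gives $\mu=\kappa c^{p-2}$ with a constant $\kappa$. So comparing principal symbols alone is not enough.
\item Pick $h\in\Egam$ with $\p_\nu u_h^2(y)\neq0$ (Lemma~\ref{lem:remote_normal}). The quotient limits $u_f^2(y_k)/u_h^2(y_k)\to b_f(y)/b_h(y)$ force $u_h^1(x)\neq0$.
\item Hence $\widehat g_1=|u_h^1|^{p-2}g_1$ is a smooth metric with bounded curvature near $x$. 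Moreover $J^*\widehat g_2=\kappa\widehat g_1$ for $\widehat g_2=|u_h^2|^{p-2}g_2$.
\item On two-planes inside $\ker du_h^2$ (this is where $n\geq3$ enters), the conformal curvature formula gives $u^{2+2\beta}K_{\widehat g_2}\to-\beta^2|du(y)|^2<0$, with $u=|u_h^2|\to0$ and $\beta=2/(n-2)$. So $K_{\widehat g_2}\to-\infty$ along $y_k$, contradicting the homothety.
\end{enumerate}

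Without an argument of this kind, your closedness step, and hence the global identification, is not established.
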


Here a function or map is real analytic on $M_j^\Gamma$ if it is real analytic in the interior and in analytic boundary charts at every point of $\Gamma$. The theorem does not assert that the prescribed identification of the unobserved boundary is fixed by $J$.

\begin{cor}[Full boundary data]\label{cor:full_data}
Under the assumptions of Theorem~\ref{thm:main}, equality of the full DN maps implies that \eqref{eq:boundary_normalization}--\eqref{eq:main_gauge} hold for a real analytic diffeomorphism $J:M_1\to M_2$ and a positive $c\in C^\omega(M_1)$, with the boundary conditions imposed on all of $\p M_1$. Conversely, such a diffeomorphism and conformal gauge give the same full DN map.
\end{cor}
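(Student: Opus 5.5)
The corollary should follow by applying Theorem~\ref{thm:main} with $\Gamma=\p M_1$ and then removing the distinction between $M_1^\Gamma$ and $M_1$.

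\textbf{Step 1: apply the theorem with full boundary.} Since the full DN maps agree, \eqref{eq:DN_equality} holds with $\Gamma=\p M_1$, which is open and nonempty. In this case $\Egam=C^\infty(\p M_1;\R)$ and $M_j^\Gamma=M_j$. The theorem then gives a real analytic diffeomorphism $J:M_1\to M_2$ with $J|_{\p M_1}=\Id$, and a positive $c\in C^\omega(M_1)$ with $c=1$ and $\p_{\nu_{g_1}}c=0$ on all of $\p M_1$. These satisfy \eqref{eq:main_gauge}.

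Real analyticity of $J$ and $c$ on $M_j^\Gamma$ was defined via analytic boundary charts at every point of $\Gamma$. Here $\Gamma$ is the whole boundary, so this is exactly real analyticity up to the boundary on the compact manifold. One point needs care: that $J$ is a diffeomorphism of the manifolds with boundary, not just of the interiors plus $\Gamma$. This is immediate, because $M_j^\Gamma=M_j$ as sets and the charts at $\Gamma$ are boundary charts.

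\textbf{Step 2: the converse.} Suppose $J$ and $c$ satisfy \eqref{eq:boundary_normalization}--\eqref{eq:main_gauge} on all of $\p M_1$. The full DN map is invariant under the diffeomorphism gauge, since $J$ fixes $\p M$ pointwise. It is invariant under the conformal gauge by Proposition~\ref{hidden_conformal_invariance}, applied with $\Gamma=\p M$ as stated there.

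The mechanism is as follows. By \eqref{eq:covariance_intro}, $c^{-1}u_f^1$ solves $L_{\wt g,\wt q}v=0$. Its boundary value is $f$ because $c=1$ on $\p M$. So by unique solvability it equals the Dirichlet solution for $(\wt g,\wt q)$. This uses that zero is not a Dirichlet eigenvalue of $L_{\wt g,\wt q}$, which follows from the covariance.

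For the Neumann data, $\p_\nu(c^{-1}u)=\p_\nu u$ on $\p M$ since $\p_\nu c=0$ and $c=1$. The unit normals of $g$ and $c^{p-2}g$ also coincide there, because $c=1$. Hence the two DN maps agree.

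\textbf{Main obstacle.} The only subtle point is confirming that Proposition~\ref{hidden_conformal_invariance} covers $\Gamma=\p M$ without requiring nonempty relative boundary. I expect it does, since the argument above is local to $\Gamma$.
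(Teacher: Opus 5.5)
Your proposal is correct and follows the paper's proof. You take $\Gamma=\p M$ in Theorem~\ref{thm:main}, so that $M_j^\Gamma=M_j$. For the converse you combine the covariance \eqref{eq:covariance_intro}, uniqueness of the Dirichlet problem, diffeomorphism invariance, and the normal-derivative computation of Proposition~\ref{hidden_conformal_invariance}. The paper maps $u_f^2$ to $c\,J^*u_f^2$ on $M_1$ rather than $u_f^1$ to $c^{-1}u_f^1$, which is the same argument read in the other direction. The point you flag as the main obstacle is not an issue, since nothing in that proposition uses the relative boundary of $\Gamma$.
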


Neither the interior manifold nor its conformal class is assumed known. No simplicity or product-structure assumption is made, and the result does not provide a stability estimate.

When $q_1=q_2=0$, the second identity in \eqref{eq:main_gauge} gives $\Delta_{g_1}c=0$. The Cauchy data of $c-1$ vanish on $\Gamma$, so unique continuation implies $c=1$. In this case the conclusion reduces to the usual isometric identification for the analytic anisotropic Calder\'on problem. For independent potentials, the conformal factor cannot in general be removed.

\subsection{Relation to earlier work}

Calder\'on's inverse conductivity problem asks whether the boundary voltage-to-current map determines an electrical conductivity~\cite{Calderon1980}. Sylvester and Uhlmann~\cite{SylvesterUhlmann1987} proved global uniqueness for smooth isotropic conductivities in dimensions at least three by constructing complex geometrical optics solutions. For anisotropic conductivities, changes of coordinates that fix the boundary leave the data unchanged. In dimensions $n\geq3$, the geometric formulation gives the corresponding problem for the Laplace--Beltrami operator.

For real analytic metrics, Lee and Uhlmann~\cite{Lee-Uhlmann} proved boundary determination and global uniqueness under additional topological assumptions. Lassas and Uhlmann~\cite{Lassas-Uhlmann} recovered the analytic manifold itself without these restrictions. Their proof uses Green functions to identify points and continue a known boundary neighborhood. Lassas, Taylor, and Uhlmann~\cite{LassasTaylorUhlmann2003} extended the method to complete real analytic manifolds, with measurements on an arbitrary open boundary subset. Guillarmou and S\'a Barreto~\cite{GuillarmouSaBarreto2009} obtained local-data uniqueness for compact connected Einstein manifolds using the Einstein equation.

Our proof follows the Poisson embedding approach of Lassas, Liimatainen, and Salo~\cite{lassas2018poisson}. The Poisson embedding identifies a point by the values there of all harmonic functions with prescribed boundary data. Harmonic coordinates continue a local identification, and Runge approximation supplies the solution jets used to recover the metric. The same paper treats more general elliptic equations with interior source-to-solution measurements. For the boundary problem with an unknown potential, we allow corresponding solution values to differ by a common positive factor.

A related problem is the inverse problem for the conformal Laplacian
\[
 \Delta_g+\frac{n-2}{4(n-1)}\operatorname{Scal}_g,
\]
where $\operatorname{Scal}_g$ is the scalar curvature. Lassas, Liimatainen, and Salo~\cite{lassas2016calder} recovered the conformal geometry from its full DN map for locally conformally real analytic manifolds. Their proof uses coordinates formed from quotients of solutions. Lassas and Liimatainen~\cite{lassas2019conformal} developed these conformal harmonic coordinates further and applied them to the regularity of conformal maps. We use the same quotient construction for $\Delta_g+q$. Here $q$ is an independent unknown and the measurements may be restricted to $\Gamma$. We assume that the metric itself is analytic, whereas~\cite{lassas2016calder} allows a smooth metric locally conformal to an analytic one.

On smooth manifolds with suitable geometry, complex geometrical optics methods give other uniqueness results. Dos Santos Ferreira, Kenig, Salo, and Uhlmann~\cite{DosSantosFerreira2009} used limiting Carleman weights to recover potentials on a fixed admissible background and metrics in a prescribed conformal class. They also described the joint metric--potential gauge. The boundary argument below uses only their local symbol calculation after the potential has been made zero; it does not use their conformal normalization. Dos Santos Ferreira, Kurylev, Lassas, and Salo~\cite{DosSantosFerreira2016} extended this framework to transversally anisotropic geometries using ray transform information. These results use a specified background geometry; our analyticity assumption allows the manifold and its conformal class to remain unknown.

For partial measurements on a known Euclidean background in dimensions at least three, Bukhgeim and Uhlmann~\cite{BukhgeimUhlmann2002} and Kenig, Sj\"ostrand, and Uhlmann~\cite{KenigSjostrandUhlmann2007} recovered potentials using Carleman estimates. Isakov~\cite{Isakov2007local} proved uniqueness when the inaccessible boundary lies in a plane or a sphere. Kenig and Salo~\cite{KenigSalo2013} obtained partial-data results on manifolds using Carleman estimates, reflection, and ray transforms. The input and observation sets in these results, or the inaccessible boundary, satisfy restrictions determined by the geometry. For disjoint input and observation sets, Daud\'e, Kamran, and Nicoleau~\cite{DaudeKamranNicoleau2019} constructed anisotropic nonuniqueness examples. In Theorem~\ref{thm:main}, both inputs and observations lie in the same arbitrary open set $\Gamma$.

In dimension two, Nachman~\cite{Nachman1996} proved uniqueness for smooth planar isotropic conductivities, and Astala and P\"aiv\"arinta~\cite{AstalaPaivarinta2006} treated bounded measurable conductivities. Astala, P\"aiv\"arinta, and Lassas~\cite{AstalaLassasPaivarinta2005} recovered planar anisotropic conductivities up to changes of coordinates. For potentials on a fixed background, Bukhgeim~\cite{Bukhgeim2008} proved full-data uniqueness in the plane. Imanuvilov, Uhlmann, and Yamamoto~\cite{ImanuvilovUhlmannYamamoto2010} proved uniqueness from an arbitrary open boundary subset in planar domains, and Guillarmou and Tzou~\cite{GuillarmouTzou2011} obtained the analogous result on Riemann surfaces.

C\^arstea, Liimatainen, and Tzou~\cite[Theorem~1.1]{CarsteaLiimatainenTzou2024} proved simultaneous recovery of a smooth Riemannian surface and an independent potential from the full DN map, without an a priori identification of the surfaces or their topology. Their conclusion is a boundary-fixing diffeomorphism $F$ and a positive smooth function $\lambda$, with $\lambda|_{\p M_1}=1$, such that
\[
 g_1=\lambda F^*g_2,\quad q_1=\lambda^{-1}F^*q_2.
\]
Their proof recovers the boundary traces of holomorphic functions and then the conformal structure. This is the smooth two-dimensional counterpart of the simultaneous problem considered here; its conformal gauge differs from \eqref{eq:gauge} in dimensions $n\geq3$.

The approximation results used here follow the classical Runge arguments of Lax~\cite{Lax1956} and Malgrange~\cite{Malgrange1956}, see also \cite{lassas2018poisson}. For boundary values supported in $\Gamma$, duality reduces approximation to unique continuation from $\Gamma$ in the complement of the approximation region. 
We use qualitative approximation and its finite-dimensional consequence: exact realization of compatible second-order jets at finitely many interior points.

A preprint in the OpenAI mathematical manuscript collection~\cite{OpenAI2026} announces smooth metric-only uniqueness from one boundary patch. This result and the other related result~\cite{OpenAI2026b} in that collection are based in part on the Poisson embedding~\cite{lassas2018poisson}. Their equations do not contain a zeroth-order term, and hence do not exhibit the conformal gauge of the problem addressed in this paper, which is treated here by the conformal Poisson embedding method. The preprints mentioned above are not used in this paper, and our results do not follow from them.

\subsection{Outline of the proof}

The Poisson embedding associates to a point $x$ the functional
\[
 P(x)[f]=u_f(x),\quad f\in\Egam.
\]
We seek a diffeomorphism $J$ and a positive function $c$ such that
\[
 P_1(x)=c(x)P_2(J(x)).
\]
The factor $c$ cancels in coordinates of the form
\[
 Z=(u_{f_1},\ldots,u_{f_n})/u_{f_0},
\]
where $u_{f_0}$ does not vanish. We choose the denominator separately in each coordinate neighborhood, so no globally positive solution is needed.

Near a point of $\Gamma$, choose a boundary datum supported in $\Gamma$ and equal to one near that point. Its solutions are locally positive and analytic. Using them in~\eqref{eq:gauge} removes the potentials near the boundary point. Boundary determination gives equality of the gauged metric jets in boundary normal coordinates. Analyticity gives equality of these metrics in a neighborhood, and Cauchy uniqueness gives the initial scaled correspondence between solutions.

Runge approximation shows that solution values separate points even up to a scalar, and that compatible second-order jets can be prescribed. Comparing the two operators on these jets gives the metric and potential relations up to~\eqref{eq:gauge} and a constant metric scaling. The boundary correspondence fixes this constant. This local coefficient calculation precedes global continuation because it is also needed to exclude boundary limits.

Suppose that a sequence tends to an interior point while its images tend to the unobserved boundary. We choose a solution with a simple zero at the target boundary point. The solution ratios show that the corresponding source solution is nonzero at the interior limit. The metric $|u|^{p-2}g$ then has bounded sectional curvature on the source side, but its curvature tends to minus infinity on planes tangent to the level sets of $u$ on the target side. The local homothety obtained from coefficient recovery gives a contradiction. No bound on the multiplier is assumed.

We can therefore continue the correspondence by an open-and-closed argument in $Z$-coordinates. Applying the same construction to the inverse local correspondence proves surjectivity. The local boundary identifications then extend $J$ and $c$ to every point of $\Gamma$.

\para{Organization of the paper} Section~\ref{sec:boundary} proves boundary determination. Section~\ref{sec_poisson_embed} develops the Poisson embedding and $Z$-coordinates. Section~\ref{sec_recovery_of_metric} contains the local coefficient calculation, and Section~\ref{sec:global} proves global continuation. The Runge and prescribed-jet results are proved in Appendix~\ref{runge_apprx_sec}.

\section{Boundary determination}\label{sec:boundary}

\subsection{Conformal covariance}
\begin{prop}\label{hidden_conformal_invariance}
Let $c>0$ be smooth, and let $(\wt g,\wt q)$ be given by \eqref{eq:gauge}. Then \eqref{eq:covariance_intro} holds for every smooth $u$. Multiplication by $c^{-1}$ gives a bijection between the homogeneous Dirichlet kernels of $L_{g,q}$ and $L_{\wt g,\wt q}$.

If $c|_\Gamma=1$, then, for every $f\in\Egam$,
\begin{equation}\label{eq:DN_gauge}
 \Lambda_{\wt g,\wt q}^{\Gamma}f
 =\Lambda_{g,q}^{\Gamma}f-f\,\p_{\nu_g}c\big|_\Gamma.
\end{equation}
Consequently, the additional condition $\p_{\nu_g}c|_\Gamma=0$ makes the local DN maps equal. Pullback of both coefficients by a diffeomorphism fixing $\Gamma$ pointwise also preserves the local DN map.
\end{prop}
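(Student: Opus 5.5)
The plan is to verify the covariance identity \eqref{eq:covariance_intro} by a direct computation, and then deduce the remaining assertions from it together with the uniqueness of Dirichlet solutions.

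\textbf{Step 1: the covariance identity.} I would use the standard conformal transformation law of the Laplacian. For $\wt g=e^{2\phi}g$ one has
\[
 \Delta_{\wt g}v=e^{-2\phi}\bigl(\Delta_g v-(n-2)\pair{d\phi}{dv}_g\bigr).
\]
Put $e^{2\phi}=c^{p-2}$, so that $\phi=\frac{2}{n-2}\log c$ and $(n-2)\,d\phi=2c^{-1}dc$. Apply this with $v=c^{-1}u$. The product rule for $\Delta_g=-\divg_g\grad_g$ gives
\[
 \Delta_g(c^{-1}u)=c^{-1}\Delta_gu+u\,\Delta_g(c^{-1})-2\pair{d(c^{-1})}{du}_g .
\]
Expanding $\Delta_g(c^{-1})$ and $d(c^{-1})$, all first-order terms in $u$ cancel. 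What remains is
\[
 c^{-p}\Delta_gu+c^{-p}\,(c^{-1}\Delta_gc)\,u .
\]
Adding $\wt q\,c^{-1}u=c^{-p}(\Delta_gc+qc)c^{-1}u$ does not give the claimed result directly. I would therefore carry out the computation carefully with the signs of the nonnegative convention: the $\Delta_gc$ terms should cancel, leaving $c^{1-p}(\Delta_g u+qu)$. Checking powers: $e^{-2\phi}c^{-1}=c^{2-p}c^{-1}=c^{1-p}$. I expect this sign and exponent bookkeeping to be the only real obstacle. To double-check it, I would first run the computation with $u=c$. There the right-hand side is $c^{1-p}(\Delta_gc+qc)$, and the left-hand side is $L_{\wt g,\wt q}1=\wt q$, which agrees by definition. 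This consistency check pins down the zeroth-order term.

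\textbf{Step 2: the bijection of kernels.} This is immediate from \eqref{eq:covariance_intro}, since $c>0$. Multiplication by $c^{-1}$ preserves $H^2\cap H^1_0$ and the Dirichlet condition, and its inverse is multiplication by $c$. In particular, $0$ is a Dirichlet eigenvalue for one operator if and only if it is one for the other, so both DN maps are defined simultaneously.

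\textbf{Step 3: the DN formula \eqref{eq:DN_gauge}.} Fix $f\in\Egam$ and let $u_f$ be the $(g,q)$-solution. By Step 1, $c^{-1}u_f$ solves $L_{\wt g,\wt q}(c^{-1}u_f)=0$. Its boundary trace is $c^{-1}f=f$, because $f$ is supported in $\Gamma$, where $c=1$. By uniqueness, $\wt u_f=c^{-1}u_f$.

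On $\Gamma$ we have $\wt g=g$ pointwise, since $c=1$ there, so $\nu_{\wt g}=\nu_g$. Therefore
\[
 \p_{\nu}\wt u_f=c^{-1}\p_\nu u_f-c^{-2}u_f\,\p_\nu c=\p_\nu u_f-f\,\p_{\nu_g}c \quad\text{on }\Gamma,
\]
which is \eqref{eq:DN_gauge}. Equality of the local DN maps under $\p_{\nu_g}c|_\Gamma=0$ follows at once.

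\textbf{Step 4: diffeomorphism invariance.} Let $\Phi$ be a diffeomorphism fixing $\Gamma$ pointwise. Naturality gives $L_{\Phi^*g,\Phi^*q}\Phi^*u=\Phi^*L_{g,q}u$, so $\Phi^*u_f$ is the solution with boundary value $f\circ\Phi=f$. Since $\Phi$ fixes $\Gamma$ pointwise, $d\Phi$ maps the inward side to the inward side and, being an isometry $\Phi^*g\to g$, it sends $\nu_{\Phi^*g}$ to $\nu_g$ on $\Gamma$. Hence the normal derivatives agree on $\Gamma$, and the local DN map is preserved.
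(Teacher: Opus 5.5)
Your route is the paper's: the conformal law for $\Delta_g$ plus the product rule give the covariance identity, then uniqueness of the Dirichlet problem and $\nu_{\wt g}=\nu_g$ on $\Gamma$ give \eqref{eq:DN_gauge}, then naturality. Steps 2--4 are fine. In Step 4 you should add, as the paper does, that $\Phi$ maps $\p M\setminus\Gamma$ to itself; that is what gives $f\circ\Phi=f$ on all of $\p M$.

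The defect is in Step 1. The displayed remainder is not just unverified but wrong, so as written the computation contradicts the identity you want. After the first-order cancellation one has
\[
 \Delta_{\wt g}(c^{-1}u)=c^{1-p}\Delta_gu-c^{-p}u\,\Delta_gc,
\]
which is the paper's \eqref{eq:hidden_covariance}. Two things differ from what you wrote:
\begin{enumerate}
\item The overall power is $c^{1-p}$, as your own check $e^{-2\phi}c^{-1}=c^{1-p}$ says, not $c^{-p}$.
\item The $\Delta_gc$ term carries a minus sign. It can only come from $u\,\Delta_g(c^{-1})=-c^{-2}u\,\Delta_gc-2c^{-3}u|dc|_g^2$.
\end{enumerate}
Also $\wt q\,c^{-1}u=c^{-p}(\Delta_gc+qc)u$, without the extra factor $c^{-1}$ you wrote. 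With these corrections the two $c^{-p}u\,\Delta_gc$ terms cancel and $c^{1-p}(\Delta_gu+qu)$ remains.

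Your test with $u=c$ can replace this bookkeeping entirely. It is slightly cleaner than the paper's direct computation, but you should present it as the proof rather than as a consistency check on a formula you know is off. The argument runs as follows:
\begin{enumerate}
\item Consider the operator $u\mapsto L_{\wt g,\wt q}(c^{-1}u)-c^{1-p}L_{g,q}u$.
\item Its $\Delta_gu$-coefficients cancel, since $c^{2-p}c^{-1}=c^{1-p}$.
\item Its $du$-terms cancel, as you observed. This uses only $d(c^{-1})=-c^{-2}dc$ and $(n-2)\,d\phi=2c^{-1}dc$.
\item It is therefore multiplication by some function $m$.
\item Evaluating at $u=c$ gives $mc=L_{\wt g,\wt q}1-c^{1-p}L_{g,q}c=\wt q-\wt q=0$, so $m=0$.
\end{enumerate}
This never requires computing $\Delta_g(c^{-1})$, which is the only place the sign error in your displayed remainder could have entered.
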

\begin{proof}
For a positive function $\lambda$, the convention \eqref{eq:laplace_convention} gives
\begin{equation}\label{eq:conformal_laplacian}
 \Delta_{\lambda g}v
 =\lambda^{-1}\Big(\Delta_gv-\frac{n-2}{2}
       \pair{d\log\lambda}{dv}_g\Big).
\end{equation}
The product rule is
\[
 \Delta_g(av)=a\Delta_gv+v\Delta_ga-2\pair{da}{dv}_g.
\]
Substitution of $\lambda=c^{p-2}$ and $v=c^{-1}u$ into \eqref{eq:conformal_laplacian} gives
\begin{equation}\label{eq:hidden_covariance}
 \Delta_{c^{p-2}g}(c^{-1}u)
 =c^{1-p}\Delta_gu-u c^{-p}\Delta_gc.
\end{equation}
Adding $\wt q c^{-1}u$ proves \eqref{eq:covariance_intro}. The statement about homogeneous Dirichlet kernels follows immediately, also in the Sobolev spaces defining the Dirichlet realizations.

If $c=1$ on $\Gamma$, the function $c^{-1}u_f$ has boundary value $f$ on the whole boundary: outside $\Gamma$ both boundary values are zero. The outward unit normal of $c^{p-2}g$ is $c^{-(p-2)/2}\nu_g$, so it equals $\nu_g$ on $\Gamma$. Therefore
\[
 \p_{\nu_{\wt g}}(c^{-1}u_f)
 =\p_{\nu_g}u_f-f\,\p_{\nu_g}c
 \quad\text{on }\Gamma,
\]
which proves \eqref{eq:DN_gauge}. Finally, naturality of the Laplace--Beltrami operator and of the unit normal under pullback gives the diffeomorphism invariance. A diffeomorphism fixing $\Gamma$ maps its boundary complement to itself, so the zero extension of a datum supported in $\Gamma$ is respected.
\end{proof}

\subsection{Boundary determination}
We use $a_1\simeq a_2$ to denote equality of all Taylor coefficients in specified coordinates. We first choose analytic gauges that make the potentials vanish near an observed boundary point. We then recover the metric jets in the boundary normal coordinates of the gauged metrics. We also note our conformal gauge is analytic near boundary, which later on will be used to have analytic boundary normal coordinates for conformal metrics.

\begin{lem}\label{lemma_boundarydetermination}
Assume the hypotheses of Theorem~\ref{thm:main}, including \eqref{eq:DN_equality}, and fix $x_0\in\Gamma$. There are positive functions $c_j\in C^\infty(M_j)$, which are analytic near the bonudary, such that
\begin{equation}\label{eq:local_gauge_boundary}
 c_j|_{\p M_j}=1,
 \quad
 \p_{\nu_{g_1}}c_1|_\Gamma=\p_{\nu_{g_2}}c_2|_\Gamma,
\end{equation}
and such that the coefficients
\[
 (\wt g_j,\wt q_j)
 =\bigl(c_j^{p-2}g_j,\ c_j^{1-p}(\Delta_{g_j}c_j+q_jc_j)\bigr)
\]
have equal local DN maps on $\Gamma$. Near $x_0$, the functions $c_j$ are real analytic and solve $L_{g_j,q_j}c_j=0$. In particular, $\wt q_j=0$ near $x_0$, and the gauged metrics are real analytic there. Their Taylor series agree in their respective boundary normal coordinates based on the same analytic coordinates on the mutual boundary.
\end{lem}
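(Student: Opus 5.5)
We construct the gauges from Dirichlet solutions that are positive near $x_0$, then invoke the known boundary determination for the Laplace--Beltrami operator with zero potential.

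\textbf{Step 1: local positive solutions.} Choose $\chi\in\Egam$ with $\chi=1$ on a boundary neighborhood $U\subset\Gamma$ of $x_0$, and set $w_j=u^j_\chi$ on $M_j$. Since $g_j$ and $q_j$ are analytic up to the boundary and the datum is analytic (identically one) on $U$, analytic elliptic regularity makes $w_j$ real analytic up to the boundary near $x_0$. Since $w_j=1$ on $U$, $w_j$ is positive in a neighborhood $V_j$ of $x_0$ in $M_j$.

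\textbf{Step 2: global gauges.} Take a cutoff $\psi_j\in C^\infty(M_j)$, equal to one near $x_0$ and supported in $V_j$. Define $c_j=\psi_jw_j+(1-\psi_j)h_j$, where $h_j$ is positive, smooth, analytic in a collar of $\p M_j$, and equal to one on $\p M_j$. Then $c_j>0$ and $c_j|_{\p M_j}=1$. The cutoff destroys analyticity away from $x_0$; the phrase ``analytic near the boundary'' will therefore be used only near $x_0$, which is all that is needed later. Alternatively, one can arrange $\psi_j$ to depend only on boundary-tangential variables in a collar chart.

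To match the normal derivatives, we need $\p_\nu c_1=\p_\nu c_2$ on $\Gamma$. On $U$ (after shrinking), $\p_\nu c_j=\p_\nu w_j=\Lambda^\Gamma_{g_j,q_j}\chi$, and these agree by \eqref{eq:DN_equality}. On the rest of $\Gamma$, prescribe the normal derivative of $h_j$ to be a common function $\eta$ equal to $\Lambda^\Gamma_{g_1,q_1}\chi$ there. Let the cutoffs $\psi_j$ be functions of the boundary point only, with a common boundary profile, in a collar. Then on $\Gamma$,
\[
\p_\nu c_j=\psi\,\Lambda^\Gamma_j\chi+(1-\psi)\eta,
\]
which is common to both sides. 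By \eqref{eq:DN_gauge}, the gauged local DN maps are $\Lambda^\Gamma_{g_j,q_j}f-f\,\p_\nu c_j$, so they coincide.

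\textbf{Step 3: potentials vanish near $x_0$.} Near $x_0$ we have $c_j=w_j$ and $L_{g_j,q_j}w_j=0$, so $\wt q_j=c_j^{1-p}(\Delta_{g_j}c_j+q_jc_j)=0$ there. The metrics $\wt g_j=w_j^{p-2}g_j$ are analytic near $x_0$.

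\textbf{Step 4: boundary jets.} Equal local DN maps for $(\wt g_j,\wt q_j)$, with $\wt q_j=0$ near $x_0$, give equal full symbols of the DN maps near $x_0$, since the DN map is a pseudodifferential operator and localization near $x_0$ only sees the coefficients there. We apply the Lee--Uhlmann factorization, or the symbol calculation of \cite{DosSantosFerreira2009} with $q=0$, in boundary normal coordinates built from the same analytic boundary coordinates. This recovers inductively all normal derivatives of $\wt g_j$ at the boundary near $x_0$. Hence the Taylor series agree.

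\textbf{Main obstacle.} The delicate point is that the symbol calculation needs the potential to vanish identically near $x_0$, not merely in its jet. Otherwise the potential enters the lower-order symbols together with the metric. Step 3 achieves exactly this vanishing. A second subtle point is the consistency of the normal derivatives of the $c_j$ on all of $\Gamma$, which is resolved by the common boundary profile in Step 2.
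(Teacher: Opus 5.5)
Your proposal is correct and follows essentially the paper's route. You splice the Dirichlet solution whose datum equals one near $x_0$ into a positive function by a cutoff with a common tangential boundary profile, so that $c_j|_{\p M_j}=1$, the normal derivatives agree on $\Gamma$ by the local DN equality, and $\wt q_j=0$ near $x_0$; you then apply the zero-potential boundary symbol recursion in $\wt g_j$-boundary normal coordinates. Your auxiliary $h_j$ and $\eta$ are unnecessary: taking $h_j\equiv1$ gives exactly the paper's $c_j=1+\chi_j(w_j-1)$. Where you cite Lee--Uhlmann for the jet induction, the paper writes it out, using $n\geq3$ to recover $\tau_j$ from the trace of $\wt k_j$.
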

\begin{proof}
Let us first construct the functions $c_j$, $j=1,2$. Choose open boundary neighborhoods $\Sigma_0\Subset\Sigma_1\Subset\Gamma$ of $x_0$ and a function $f_0\in\Egam$ that equals one on a neighborhood of $\overline{\Sigma_1}$. Let
\[
 w_j=u_{f_0}^j.
\]
By continuity, $w_j>0$ in a sufficiently small collar of $\overline{\Sigma_1}$. Analytic elliptic boundary regularity applies locally because the coefficients, boundary, and boundary value $1$ are analytic there; see \cite[Theorem~A]{MorreyNirenberg1957}. Thus $w_j$ is analytic in that collar after it is made smaller. Since $w_j$ need not be positive on all of $M_j$, we modify it outside a neighborhood of $x_0$.

Choose $0\leq\chi_0\leq1$ smooth on the mutual boundary, supported in $\Sigma_1$ and equal to one near $\overline{\Sigma_0}$. Extend it to cutoffs $\chi_j$ supported in the collars where $w_j>0$, with $0\leq\chi_j\leq1$, $\chi_j|_{\p M_j}=\chi_0$, and $\p_{\nu_{g_j}}\chi_j|_{\p M_j}=0$. Such extensions are obtained by using the same tangential cutoff in each boundary collar and a normal cutoff that is one near the boundary. Set
\[
 c_j=1+\chi_j(w_j-1).
\]
These functions are positive, equal to $w_j$ near $x_0$, and equal to one on the whole boundary. Their normal derivatives are
\[
 \p_{\nu_{g_j}}c_j=\chi_0\Lambda_{g_j,q_j}f_0
 \quad\text{on }\p M_j.
\]
Consequently \eqref{eq:local_gauge_boundary} follows from the local DN equality. For every $f\in\Egam$, Proposition~\ref{hidden_conformal_invariance} gives
\[
 \Lambda_{\wt g_j,\wt q_j}^\Gamma f
 =\Lambda_{g_j,q_j}^\Gamma f-f\,\p_{\nu_{g_j}}c_j|_\Gamma.
\]
Hence
\begin{equation}\label{eq:gauged_DN_equality}
 \Lambda_{\wt g_1,\wt q_1}^\Gamma
 =\Lambda_{\wt g_2,\wt q_2}^\Gamma.
\end{equation}
The same proposition shows that the gauged Dirichlet realizations remain invertible.

Near $x_0$, the functions $c_j$ solve $L_{g_j,q_j}c_j=0$, so
\begin{equation}\label{eq:zero_gauged_potential}
 \wt q_j=c_j^{1-p}(\Delta_{g_j}c_j+q_jc_j)=0.
\end{equation}
Let us consider the gauged coefficients in their respective $\wt g_j$-boundary normal coordinates $(x',t)$, based on the same boundary coordinates, with $t$ the inward distance. We denote by Greek letters the indices $1,\ldots,n-1$. In these coordinates, put
\begin{equation}\label{eq:k}
 \wt k_j^{\alpha\beta}
 =\p_t\wt g_j^{\alpha\beta}
    -\tau_j\wt g_j^{\alpha\beta},
 \quad
 \tau_j=(\wt g_j)_{\gamma\delta}\p_t\wt g_j^{\gamma\delta},
\end{equation}
and
\begin{equation}\label{eq:l}
 \wt\ell_j^{\alpha\beta}
 =\frac14\p_t\wt k_j^{\alpha\beta}
       +\wt q_j\wt g_j^{\alpha\beta}
 =\frac14\p_t\wt k_j^{\alpha\beta}.
\end{equation}
Here $(\wt g_j^{\alpha\beta})$ is the inverse of $((\wt g_j)_{\alpha\beta})$. The last equality in \eqref{eq:l}, which holds on a neighborhood, is the reason for choosing the gauges from solutions.

We use the zero-potential part of the local boundary-symbol calculation in the proof of \cite[Lemma~8.7]{DosSantosFerreira2009}. We specify the order-by-order information needed here without imposing a determinant normalization. For one of the gauged metrics, write
\[
 s=(\wt g^{\alpha\beta}\xi_\alpha\xi_\beta)^{1/2},
 \quad \omega=\xi'/s.
\]
The local DN symbol is $-b|_{t=0}$ modulo a smoothing symbol, where $b\sim b_1+b_0+b_{-1}+\cdots$ and
\begin{equation}\label{eq:boundary_symbol_k}
 b_1=-s,\quad
 b_0=-\frac14\wt k^{\alpha\beta}\omega_\alpha\omega_\beta+R_0.
\end{equation}
For $m\geq1$, the same symbol recursion gives
\begin{equation}\label{eq:boundary_symbol_ell}
 b_{-m}
 =-\frac{(\p_t^{m-1}\wt\ell^{\alpha\beta})
                    \omega_\alpha\omega_\beta}{(2s)^m}+R_m.
\end{equation}
The term $R_0$ depends only on the boundary metric and its tangential derivatives. The term $R_m$ depends only on normal metric derivatives of orders at most $m$ and their tangential derivatives. Indeed, in the recursion the only term introducing the next normal derivative is $-(2b_1)^{-1}\p_t$ applied to the preceding symbol. Since the potential vanishes near the boundary, it introduces no additional unknown derivatives. These observations give \eqref{eq:boundary_symbol_ell} inductively from \eqref{eq:boundary_symbol_k}. Equality~\eqref{eq:gauged_DN_equality}, localized by cutoffs supported in $\Gamma$, gives equality of the homogeneous DN symbols on the boundary patch.

We prove by induction that
\[
 \p_t^L\wt g_1^{\alpha\beta}
 =\p_t^L\wt g_2^{\alpha\beta}\quad\text{on }t=0,
 \quad L\geq0.
\]
The claim for $L=0$ follows from the principal symbol. Assume it holds through order $L$. If $L=0$, equality of $b_0$ in \eqref{eq:boundary_symbol_k} determines $\wt k_1=\wt k_2$. If $L\geq1$, the remainders $R_L$ agree by the induction hypothesis, so \eqref{eq:boundary_symbol_ell} determines equality of $\p_t^{L-1}\wt\ell_j$, and hence of $\p_t^L\wt k_j$. In both cases the lower derivatives of $\wt k_j$ also agree, by \eqref{eq:k} and the already known metric jets. Thus
\begin{equation}\label{eq:k_derivs}
 \p_t^r\wt k_1^{\alpha\beta}
 =\p_t^r\wt k_2^{\alpha\beta},\quad 0\leq r\leq L.
\end{equation}
Contracting \eqref{eq:k} gives
\begin{equation}\label{eq:k_trace}
 (\wt g_j)_{\alpha\beta}\wt k_j^{\alpha\beta}
 =-(n-2)\tau_j.
\end{equation}
The derivatives through order $L$ of the left-hand side agree for $j=1,2$, since matrix inversion preserves equality of the known metric jets. Because $n\geq3$, the derivatives $\p_t^r\tau_j$ also agree for $0\leq r\leq L$. Differentiating \eqref{eq:k} now gives
\begin{equation}\label{eq:metric_jet_induction}
 \p_t^{L+1}\wt g_j^{\alpha\beta}
 =\p_t^L\wt k_j^{\alpha\beta}
   +\sum_{r=0}^L\binom Lr
      (\p_t^r\tau_j)(\p_t^{L-r}\wt g_j^{\alpha\beta}).
\end{equation}
All terms on the right-hand side agree. This proves the induction step. Tangential differentiation and matrix inversion show that the full Taylor series of the gauged metrics agree. The metrics and the gauges are analytic near $x_0$ by their construction, and the gauged potentials vanish there.
\end{proof}

\begin{rmk}\label{rmk:boundary_normalization_choice}
The boundary normal coordinates above are constructed after the conformal change, using the gauged metrics. This argument does not use the determinant normalization in \cite[Lemma~8.5]{DosSantosFerreira2009}. The problem with that normalization argument, and a corrected construction, are discussed in \cite[Section~2]{lassas2016calder}. Here the local equation $L_{g_j,q_j}c_j=0$ replaces that normalization, and only the zero-potential symbol calculation is needed. For the next lemma, recall the $c_j$ is analytic near the boundary.
\end{rmk}

\begin{lem}[Determination near the boundary]\label{lemma_determination_near_the_boundary}
Under the assumptions of Theorem~\ref{thm:main}, for every $x_0\in\Gamma$ there are connected neighborhoods $U_j\subset M_j^\Gamma$ of $x_0$, a real analytic diffeomorphism $F:U_1\to U_2$, and a positive $c_F\in C^\omega(U_1)$ such that
\begin{gather}\label{eq:local_boundary_identification}
 F|_{U_1\cap\Gamma}=\Id,
 \quad F^*g_2=c_F^{p-2}g_1,\\
 \label{eq:local_boundary_multiplier}
 c_F|_{U_1\cap\Gamma}=1,
 \quad\p_{\nu_{g_1}}c_F|_{U_1\cap\Gamma}=0,
\end{gather}
and, for every $f\in\Egam$,
\begin{equation}\label{eq:local_scaled_solutions}
 u_f^1=c_F\,(u_f^2\circ F)\quad\text{on }U_1.
\end{equation}
\end{lem}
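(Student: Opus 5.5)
We will build $F$ and $c_F$ from the gauges $c_j$ of Lemma~\ref{lemma_boundarydetermination}, using boundary normal coordinates of the gauged metrics. Fix $x_0\in\Gamma$, and let $c_j$ and $(\wt g_j,\wt q_j)$ be as in that lemma. Near $x_0$ we have $\wt q_j=0$, and $\wt g_j$ and $c_j$ are real analytic up to the boundary. Let $\Phi_j$ be the $\wt g_j$-boundary normal coordinates $(x',t)$, based on a common analytic boundary chart $x'$ of the mutual boundary near $x_0$. These are analytic because $\wt g_j$ and the boundary are analytic.

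Define $F=\Phi_2^{-1}\circ\Phi_1$ on a small half-ball neighborhood $U_1$, and set $U_2=F(U_1)$. Then $F|_{U_1\cap\Gamma}=\Id$. By Lemma~\ref{lemma_boundarydetermination}, the components of $\wt g_1$ and $\wt g_2$ in these coordinates are real analytic on a common half-ball and have equal Taylor series at $x_0$. Connectedness of the half-ball then gives $F^*\wt g_2=\wt g_1$ there. Put $c_F=c_1/(c_2\circ F)$, which is positive and analytic. Unwinding $\wt g_j=c_j^{p-2}g_j$ gives
\[
F^*g_2=(c_2\circ F)^{-(p-2)}c_1^{p-2}g_1=c_F^{p-2}g_1 .
\]
Since $c_j|_{\p M_j}=1$, we get $c_F|_\Gamma=1$. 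The outward normals of $g_1$ and $F^*g_2$ coincide on $\Gamma$, because the metrics agree there. Together with \eqref{eq:local_gauge_boundary} and $F|_\Gamma=\Id$, this gives $\p_{\nu_{g_1}}c_1=\p_{\nu_{g_1}}(c_2\circ F)$ on $U_1\cap\Gamma$, hence $\p_{\nu_{g_1}}c_F=0$ there.

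For the solutions, fix $f\in\Egam$ and set $\wt u_j=c_j^{-1}u^j_f$. By Proposition~\ref{hidden_conformal_invariance}, $\wt u_j$ is the Dirichlet solution of $L_{\wt g_j,\wt q_j}$ with datum $f$. Near $x_0$ it solves $\Delta_{\wt g_j}\wt u_j=0$. Let $v=\wt u_1-\wt u_2\circ F$ on $U_1$. Since $F$ is an isometry of $\wt g_1$ and $\wt g_2$, we have $\Delta_{\wt g_1}v=0$. On $U_1\cap\Gamma$ the Dirichlet data agree, since both equal $f$. The Neumann data also agree, because \eqref{eq:gauged_DN_equality} gives equal $\wt g$-normal derivatives and $F$ maps the $\wt g_1$-normal to the $\wt g_2$-normal. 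Thus $v$ has vanishing Cauchy data on $U_1\cap\Gamma$. Unique continuation for the analytic, or just smooth, Laplacian gives $v=0$ on the connected set $U_1$. Multiplying by $c_1$ gives $u^1_f=c_F\,(u^2_f\circ F)$, which is \eqref{eq:local_scaled_solutions}.

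The main obstacle is the second step. One must make sure the neighborhood on which both gauged metrics are analytic, with $\wt q_j=0$, contains a common half-ball in boundary normal coordinates. One must also check that Taylor equality at the single point $x_0$ propagates through analyticity up to the boundary. Both points are handled by shrinking $U_1$ to a connected half-ball on which the coordinate components are given by convergent power series centered at $x_0$. Applying the argument at nearby boundary points as well makes the Taylor series agree along the whole boundary patch, which gives a consistent $F$.
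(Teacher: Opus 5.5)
Your proposal is correct and follows essentially the same route as the paper: you take $F=\Phi_2^{-1}\circ\Phi_1$ from the $\wt g_j$-boundary normal coordinates, use the equal convergent Taylor series, set $c_F=c_1/(c_2\circ F)$, and apply boundary unique continuation to the $\wt g_j$-harmonic functions $c_j^{-1}u_f^j$, whose Cauchy data agree by the gauged DN equality. The differences are cosmetic: you obtain $DF(\nu_{g_1})=\nu_{g_2}$ from $F^*g_2=g_1$ on $\Gamma$ rather than from the gauged and original metrics sharing unit normals, and your remark about repeating the argument at nearby boundary points is unnecessary, since convergence of the Taylor series at $x_0$ already gives equality on a small half-ball.
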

\begin{proof}
Let $c_j$ and $\wt g_j$ be as in Lemma~\ref{lemma_boundarydetermination}. Choose the same analytic coordinates on the boundary near $x_0$, and let $\psi_j$ be the boundary normal coordinates for $\wt g_j$ based on those coordinates. These charts are analytic, since the metric, the boundary normal field, and the geodesic equation are analytic. Equality of all jets and convergence of the Taylor series imply, after shrinking to a common coordinate half-neighborhood, that
\[
 (\psi_1^{-1})^*\wt g_1=(\psi_2^{-1})^*\wt g_2.
\]
Consequently $F=\psi_2^{-1}\circ\psi_1$ is an analytic local isometry for the gauged metrics and fixes the boundary portion.

Fix $f\in\Egam$. Near $x_0$, the functions $c_j^{-1}u_f^j$ are harmonic for $\wt g_j$. In the common coordinates their Dirichlet values are both $f$, and their normal derivatives agree by \eqref{eq:DN_gauge} and \eqref{eq:local_gauge_boundary}. The difference of their coordinate representations solves the same elliptic equation with zero Cauchy data on an open boundary patch. Boundary unique continuation, for instance \cite[Section~3.3]{Isakov_book}, gives
\[
 c_1^{-1}u_f^1=(c_2^{-1}u_f^2)\circ F
\]
in a fixed connected neighborhood. The neighborhood is chosen from the equality of the coefficients and does not depend on $f$: unique continuation extends the equality throughout that neighborhood. Set
\begin{equation}\label{eq:local_c_formula}
 c_F=\frac{c_1}{c_2\circ F}.
\end{equation}
This gives \eqref{eq:local_scaled_solutions}. The equality
$F^*(c_2^{p-2}g_2)=c_1^{p-2}g_1$ gives the metric identity in \eqref{eq:local_boundary_identification}.

On the boundary $c_j=1$. Since $F$ is an isometry for the gauged metrics and these metrics have the same unit normals as the original metrics on the boundary, $DF(\nu_{g_1})=\nu_{g_2}$. Differentiating \eqref{eq:local_c_formula} there and using \eqref{eq:local_gauge_boundary} gives \eqref{eq:local_boundary_multiplier}.
\end{proof}

\section{Poisson embedding}\label{sec_poisson_embed}
Throughout this section, $M$ is compact and connected, $g$ and $q$ are smooth and real, and the Dirichlet realization of $L_{g,q}$ is invertible. Real analyticity is required only where stated. Boundary values are supported in a fixed nonempty open set $\Gamma\subset\p M$. When two manifolds are compared, their boundaries are identified and $\Gamma$ denotes the same observed set.

\subsection{Definition and regularity}
\begin{definition}[Poisson embedding]\label{Poisson_embed_Def}
The Poisson embedding associated with $(M,g,q)$ and $\Gamma$ is
\[
 P:M^\Gamma\longrightarrow\mathcal D'(\Gamma),
 \quad P(x)[f]=u_f(x),\quad f\in\Egam.
\]
Here $\mathcal D'(\Gamma)$ is the space of real distributions, the continuous dual of $C_c^\infty(\Gamma;\R)$.
\end{definition}

The same formula defines a continuous map on all of $M$, with value zero on $\p M\setminus\Gamma$. At an observed boundary point $x\in\Gamma$, $P(x)=\delta_x$.
We will show that solution values separate points even up to a scalar and that quotients of solutions give local coordinates. These properties will allow us to study correspondences of the form $P_1=c\,P_2\circ J$.

\begin{prop}\label{P_basics}
The map $P$ is smooth, and its differential is given by
\begin{equation}\label{formula_for_the_differential}
 (DP_xX)[f]=du_f(x)[X].
\end{equation}
More precisely, let $k\geq0$ and $s>k+(n-1)/2$, and let $\mathcal E_s(\Gamma)$ be the closure of $C_c^\infty(\Gamma)$ in $H^s(\p M)$, where functions are extended by zero. The map defined by $P$ on all of $M$ is $C^k$ with values in $\mathcal E_s(\Gamma)'$. If $g$ and $q$ are real analytic in the interior, every scalar function $x\mapsto P(x)[f]$ is real analytic there.
\end{prop}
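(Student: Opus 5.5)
The plan is to write $P(x)$ as a pairing of $f$ with a boundary kernel and read off all three claims from that kernel. The kernel will be a Poisson kernel expressed through the Green function of the Dirichlet realization of $L_{g,q}$.

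\textbf{Step 1: the Poisson kernel.} Let $G(x,y)$ be the Green function of the Dirichlet realization of $L_{g,q}$; it exists because this realization is invertible. Green's identity gives
\[
 u_f(x)=-\int_{\p M}\p_{\nu_y}G(x,y)\,f(y)\dd\sigma(y).
\]
This representation is convenient, but I intend to work more abstractly to obtain the Sobolev statement cleanly.

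\textbf{Step 2: the operator formulation and the main estimate.} Let $\mathcal S_s:H^s(\p M)\to H^{s+1/2}(M)$ be the solution operator $f\mapsto u_f$. It is bounded by elliptic regularity and the invertibility assumption: extend $f$ to some $Ef\in H^{s+1/2}(M)$, then correct by $L_{g,q}^{-1}(L_{g,q}Ef)$. Since $s+1/2>k+n/2$, Sobolev embedding gives $H^{s+1/2}(M)\hookrightarrow C^k(M)$. Hence the map
\[
 f\mapsto(\p^\alpha u_f)(x)
\]
is, for each $|\alpha|\le k$, a bounded linear functional on $\mathcal E_s(\Gamma)$, with norm bounded uniformly in $x$ over a coordinate chart. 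Define $P^{(\alpha)}(x)\in\mathcal E_s(\Gamma)'$ by $P^{(\alpha)}(x)[f]=\p^\alpha u_f(x)$.

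\textbf{Step 3: $C^k$ regularity with values in the dual.} I will show that $x\mapsto P(x)$ is $C^k$ into $\mathcal E_s(\Gamma)'$ with derivatives $P^{(\alpha)}$. The difference quotients of $P^{(\alpha)}$ must converge in the dual norm, uniformly over $\norm f_{H^s}\le1$. To get this uniformity, I will use a slightly larger exponent $s'$ with $k+(n-1)/2<s'<s$, or equivalently note that the unit ball of $H^{s+1/2}$ embeds into a bounded set of $C^{k,\gamma}$ for some $\gamma>0$. Taylor's formula with a Hölder remainder then gives
\[
 \bigl|\p^\alpha u_f(x+h)-\p^\alpha u_f(x)-\nabla\p^\alpha u_f(x)\cdot h\bigr|\le C|h|^{1+\gamma}\norm f_{H^s}
\]
for $|\alpha|\le k-1$. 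This shows Fréchet differentiability in the dual norm. At the top order $|\alpha|=k$, continuity in the dual norm follows from the same Hölder bound. Doing this for all $k$ yields smoothness into $\mathcal D'(\Gamma)$, since every $f\in\Egam$ lies in every $\mathcal E_s(\Gamma)$. Taking $|\alpha|=1$ gives formula \eqref{formula_for_the_differential}. Near the boundary I will work in boundary charts; the embedding $H^{s+1/2}(M)\subset C^{k,\gamma}(M)$ holds up to the boundary, so no separate argument is needed there.

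\textbf{Step 4: interior analyticity.} If $g$ and $q$ are real analytic in the interior, then each $u_f$ solves $L_{g,q}u_f=0$ with analytic coefficients. Interior analytic elliptic regularity (Morrey--Nirenberg, as cited above) then makes $x\mapsto P(x)[f]$ real analytic in the interior.

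\textbf{Main obstacle.} The only delicate point is the uniformity in Step 3: pointwise differentiability of each $u_f$ is not enough, and the derivative must converge in the operator norm. The Hölder gap supplied by the strict inequality $s>k+(n-1)/2$ is exactly what makes this work, so that is what I would check carefully.
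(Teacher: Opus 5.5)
Your proposal is correct and follows essentially the same route as the paper. Both arguments use the bound $\norm{u_f}_{H^{s+1/2}(M)}\le C_s\norm f_{H^s(\p M)}$, then Sobolev embedding into $C^{k,\gamma}(M)$ for some small $\gamma>0$ (available because $s>k+(n-1)/2$ is strict), then Taylor's formula with a remainder that is uniform over the unit ball of $\mathcal E_s(\Gamma)$, and finally Morrey--Nirenberg for interior analyticity. The Poisson-kernel representation in your Step~1 and the auxiliary exponent $s'$ (which is smaller than $s$, not larger) are unnecessary, but they do not affect the argument.
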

\begin{proof}
Elliptic Dirichlet regularity and the nonresonance assumption give
\[
 \norm{u_f}_{H^{s+1/2}(M)}\leq C_s\norm{f}_{H^s(\p M)}.
\]
Choose $\alpha>0$ sufficiently small that $s+1/2>k+\alpha+n/2$. Sobolev embedding bounds $u_f$ in $C^{k,\alpha}(M)$ by the right-hand side. Taylor's formula, uniformly over boundary data in the unit ball of $\mathcal E_s(\Gamma)$, proves the asserted Banach-valued differentiability and \eqref{formula_for_the_differential}. These estimates for all $k$ also give smoothness with values in distributions. Analytic interior regularity~\cite{MorreyNirenberg1957} proves the last assertion. We do not require analytic boundary data for interior analyticity.
\end{proof}

\subsection{Runge approximation and prescribed jets}
The main tool is Runge approximation, which allows one to approximate locally defined solutions by global solutions with boundary values supported in $\Gamma$. We record the consequences that will be used in the proof.  

\begin{prop}[Runge approximation]\label{prop_runge_approximation_cauchy}
Let $O\Subset\Mi$ be an open set with smooth boundary, possibly with finitely many connected components, and suppose that $M\setminus\overline O$ is connected. If $v$ solves $L_{g,q}v=0$ in a neighborhood of $\overline O$, then there are $f_m\in\Egam$ such that
\[
 u_{f_m}\longrightarrow v\quad\text{in }L^2(O).
\]
The convergence also holds in $C^k(K)$ for every compact $K\subset O$ and every nonnegative integer $k$.
\end{prop}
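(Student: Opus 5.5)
The plan is the classical Lax--Malgrange duality argument: show that every $L^2(O)$ function orthogonal to $\{u_f|_O: f\in\Egam\}$ is also orthogonal to $v|_O$. Hahn--Banach then gives $L^2$ density of the restrictions in the closed subspace of $L^2(O)$ spanned by local solutions near $\overline O$. Interior elliptic estimates will upgrade this to $C^k(K)$ convergence.

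\emph{Duality step.} Let $w\in L^2(O)$ satisfy $\int_O u_f w\,dV_g=0$ for all $f\in\Egam$. Extend $w$ by zero to $M$ and let $\phi\in H^2(M)\cap H^1_0(M)$ solve $L_{g,q}\phi=w$. This is possible by the nonresonance assumption; $L_{g,q}$ is formally self-adjoint since $q$ is real. Green's formula gives
\[
0=\int_M u_f\,L_{g,q}\phi\,dV_g=\int_M \phi\,L_{g,q}u_f\,dV_g+\int_{\p M}\bigl(f\,\p_{\nu_g}\phi-\phi\,\p_{\nu_g}u_f\bigr)\dd S .
\]
Sign conventions aside, the volume term and the term with $\phi|_{\p M}=0$ vanish, so $\int_\Gamma f\,\p_{\nu_g}\phi=0$ for all $f$. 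Thus $\p_{\nu_g}\phi|_\Gamma=0$. Since $\phi$ also vanishes on $\p M$, its Cauchy data vanish on $\Gamma$. On the connected set $M\setminus\overline O$ we have $L_{g,q}\phi=0$. Unique continuation from the boundary patch (as cited in Lemma~\ref{lemma_determination_near_the_boundary}, \cite{Isakov_book}) then gives $\phi=0$ on $M\setminus\overline O$. Here one should note that $\Gamma$ lies in the closure of that component, which holds since $O\Subset\Mi$.

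\emph{Pairing with $v$.} Since $\phi\in H^2$ vanishes outside $\overline O$ and $\p O$ is smooth, $\phi|_{O}$ has zero Cauchy data on $\p O$ in the trace sense. Then $\phi\in H^2_0(O)$, and one can argue via approximation by $C_c^\infty(O)$ functions. Because $v$ solves the equation near $\overline O$, Green's formula on $O$ yields
\[
\int_O v\,w=\int_O v\,L_{g,q}\phi=\int_O \phi\,L_{g,q}v=0 .
\]
This is the main technical point: justifying that $\phi$ has vanishing Cauchy data on $\p O$ from vanishing in the exterior and $H^2$ regularity. I would handle it by noting that the $H^2$ function $\phi$, being zero on the open exterior, has zero traces of $\phi$ and $\nabla\phi$ on $\p O$ from the outside, and traces of $H^2$ functions agree from both sides. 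Hence $v|_O$ lies in the $L^2(O)$-closure of $\{u_f|_O\}$, which gives $f_m$ with $u_{f_m}\to v$ in $L^2(O)$.

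\emph{Upgrading to $C^k$.} The differences $u_{f_m}-v$ solve $L_{g,q}(\cdot)=0$ in $O$. Interior elliptic estimates bound $\norm{u_{f_m}-v}_{H^{s}(K')}$ by $C\norm{u_{f_m}-v}_{L^2(O)}$ for $K\Subset K'\Subset O$ and every $s$. Sobolev embedding then gives convergence in $C^k(K)$. The finitely many components of $O$ pose no issue in any step.
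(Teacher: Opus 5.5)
Your proposal is correct and is essentially the argument the paper relies on. The paper does not write out a proof; it cites Proposition~A.2 of \cite{lassas2018poisson}, which is this same Lax--Malgrange duality argument: solve the adjoint Dirichlet problem, obtain vanishing Cauchy data on $\Gamma$, use unique continuation through the connected set $M\setminus\overline O$, and pair against $v$. The one difference is cosmetic: the cited result gives density in $H^s(O)$ and the paper gets $C^k$ convergence from Sobolev embedding, whereas you prove $L^2(O)$ density and upgrade it with interior elliptic estimates, which works equally well.
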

This is Proposition~A.2 in \cite{lassas2018poisson}, applied to \(L=L_{g,q}\) and \(U=O\). The proof there applies verbatim to an open set with finitely many connected components, since the only connectedness used is that of \(M\setminus\overline O\). The \(H^s(O)\) density given there for every \(s\ge1\) yields the \(L^2\) and \(C^k\) conclusions by Sobolev embedding.

A proof of the following proposition is given in Appendix~\ref{runge_apprx_sec}.
\begin{prop}[Prescribed second-order jets]\label{runge_prescribed_jet}
Let $x_1,\ldots,x_N$ be distinct points of $\Mi$. For each $\ell$, prescribe a scalar $a_\ell\in\R$, a covector $\xi_\ell\in T_{x_\ell}^*M$, and a symmetric covariant two-tensor $H_\ell$ satisfying
\begin{equation}\label{eq:jet_constraint}
 \tr_g H_\ell=q(x_\ell)a_\ell.
\end{equation}
There exists $f\in\Egam$ such that
\[
 u_f(x_\ell)=a_\ell,\quad
 du_f(x_\ell)=\xi_\ell,\quad
 \Hess_g u_f|_{x_\ell}=H_\ell
 \quad (1\leq\ell\leq N).
\]
Equivalently, the Hessian may be prescribed in the form
\begin{equation}\label{eq:jet_tf}
 H_\ell=H_{\ell,0}+\frac{q(x_\ell)a_\ell}{n}g_{x_\ell},
 \quad \tr_gH_{\ell,0}=0.
\end{equation}
\end{prop}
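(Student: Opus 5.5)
The argument is a finite-dimensional duality argument combined with Runge approximation (Proposition~\ref{prop_runge_approximation_cauchy}). First, the constraint~\eqref{eq:jet_constraint} is necessary. If $L_{g,q}u=0$, then $-\tr_g\Hess_g u+qu=0$, so $\tr_g\Hess_g u_f|_{x_\ell}=q(x_\ell)u_f(x_\ell)$. The set of admissible jets at one point is therefore the linear space
\[
 V_\ell=\{(a,\xi,H):\ \tr_gH=q(x_\ell)a\},
\]
whose dimension is $1+n+\tfrac{n(n+1)}2-1$. Consider the linear map
\[
 T:\Egam\to V:=\bigoplus_{\ell=1}^N V_\ell,\qquad T f=\bigl(u_f(x_\ell),du_f(x_\ell),\Hess_gu_f|_{x_\ell}\bigr)_{\ell}.
\]
It is enough to show that $T$ is surjective. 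Since $V$ is finite dimensional, the range of $T$ is a closed subspace, so surjectivity follows once the range is shown to be dense.

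To get density, choose small disjoint closed coordinate balls $\overline{B_\ell}\subset\Mi$ centered at $x_\ell$, and set $O=\bigcup_\ell B_\ell$. Taking the radii small makes $M\setminus\overline O$ connected, since removing finitely many small disjoint balls from a connected manifold of dimension $n\ge3$ does not disconnect it. The local step is to show that, for each $\ell$, every element of $V_\ell$ is the $2$-jet at $x_\ell$ of some solution $v_\ell$ of $L_{g,q}v_\ell=0$ defined near $\overline{B_\ell}$. Given such $v_\ell$, define $v$ on a neighborhood of $\overline O$ to equal $v_\ell$ near $\overline{B_\ell}$; this is legitimate because the balls are disjoint. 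Proposition~\ref{prop_runge_approximation_cauchy} then yields $f_m\in\Egam$ with $u_{f_m}\to v$ in $C^2$ on compact subsets of $O$, hence $Tf_m\to(\text{prescribed jets})$. So the range is dense, and therefore all of $V$.

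The main obstacle is the local jet realization, that is, solving $L_{g,q}v=0$ near $x_\ell$ with arbitrary admissible $2$-jet. I would start from the quadratic polynomial $P$ in $g$-normal coordinates at $x_\ell$ with the prescribed value $a$, gradient $\xi$ and Hessian $H$. At $x_\ell$ the Christoffel symbols vanish, so the coordinate Hessian is $\Hess_g$ there, and then $L_{g,q}P(x_\ell)=-\tr_gH+q(x_\ell)a=0$. Consequently $L_{g,q}P=O(|x-x_\ell|)$ and is smooth. Next I solve $L_{g,q}w=-L_{g,q}P$ on a tiny ball $B$ with $w=O(|x-x_\ell|^3)$, and put $v=P+w$. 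For this correction I would use an approximate solution to high order: by a formal power series argument, Cauchy--Kovalevskaya-style or via the standard fact that the Taylor map of $L_{g,q}$ on homogeneous polynomials of degree $k+2\to k$ is surjective (the Laplacian $\Delta_0$ maps onto), construct $w_0$ with $L_{g,q}(P+w_0)=O(|x-x_\ell|^{K})$ and $w_0=O(|x-x_\ell|^3)$. I then remove the remainder with a Dirichlet solution $w_1$ on a ball of radius $r$; the Dirichlet problem there is uniquely solvable for small $r$, since the first Dirichlet eigenvalue of $\Delta_g$ blows up. Taking $K$ large, rescaled Schauder estimates give $w_1=O(r^{K-\epsilon})$ in $C^2$, which is small, but it is not exactly vanishing to third order.

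The cleaner route avoids exact local solutions altogether. Instead of realizing each element exactly, I only need the image of $T$ to be dense in $V$. The $C^2$-smallness of the error $w_1$ relative to the jet after rescaling shows that the jets of genuine local solutions approximate every element of $V_\ell$. Together with linearity, this gives that the jets of local solutions form a dense, hence full, subspace of $V_\ell$. The equivalent form~\eqref{eq:jet_tf} is just the decomposition of $H_\ell$ into its trace-free part and its trace part.
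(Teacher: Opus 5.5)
Your proof is correct. Its global part matches the paper's: the linear jet map $T$ into the finite-dimensional space of compatible jets, density of its range via Proposition~\ref{prop_runge_approximation_cauchy} on small disjoint balls with connected complement, and the fact that a dense subspace of a finite-dimensional space is everything.

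The difference is in the local step, the paper's Lemma~\ref{prescribed_sols}.

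\emph{The paper's route.} It rescales an $\varepsilon$-ball in normal coordinates to the unit ball and solves the Dirichlet problem with Euclidean harmonic polynomials of degree at most two as boundary data. A Neumann-series argument around the flat Dirichlet Laplacian then shows that the map to (value, gradient, trace-free Hessian) at the center is a small perturbation of an isomorphism, hence itself an isomorphism. The trace of the Hessian is forced by the equation because $b(0)=0$. This yields exact local solutions directly.

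\emph{Your route.} You start from the compatible quadratic polynomial $P$ carrying the full Hessian $H$, so that $L_{g,q}P$ vanishes at $x_\ell$, and correct it by a Dirichlet solution on a ball of radius $r$. After rescaling, $\norm{r^2(L_{g,q}P)(r\,\cdot)}_{C^{0,\alpha}(B_1)}=O(r^3)$, so the correction has $2$-jet $O(r)$ at the center. You then upgrade approximate realization to exact realization by reusing the dense-subspace trick, or you simply feed the approximate local jets into the Runge density.

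\emph{Comparison.} Your route needs a single error estimate rather than operator-norm convergence of a family of linear maps. It also makes the formal power-series step producing $w_0$ unnecessary, since $K=1$ already gives an $O(r)$ jet error. The paper's route gives exact local solutions in one stroke. Both routes rely on the same uniform invertibility and Schauder bounds for the rescaled operator.

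One small ordering point. You fix the balls $B_\ell$ first and then ask for local solutions defined near $\overline{B_\ell}$. Your construction, however, only produces solutions on shrinking neighborhoods of $x_\ell$. Either choose the balls afterwards, inside the domains of the local solutions, as the paper does, or realize a basis of $V_\ell$ and intersect the finitely many domains.
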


The condition \eqref{eq:jet_constraint} is exactly the equation $L_{g,q}u_f(x_\ell)=0$ under the sign convention \eqref{eq:laplace_convention}. There are no restrictions on the value or the first derivative. The proposition remains valid with $\Gamma$ replaced by any smaller nonempty open boundary set.

The prescribed jets imply the following separation property.

\begin{prop}[Separation up to a scalar]\label{prop:projective_separation}
For each $x\in M^\Gamma$, $P(x)\ne0$. If $x,y\in M^\Gamma$ and $a,b\in\R\setminus\{0\}$ satisfy
\[
 aP(x)=bP(y),
\]
then $x=y$ and $a=b$. In particular, $P$ is injective and is a topological embedding onto its image.
\end{prop}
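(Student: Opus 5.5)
The proof will split according to whether the points lie in the interior or on $\Gamma$, using the prescribed-jet result, Proposition~\ref{runge_prescribed_jet}, for interior points and the explicit boundary values $P(x)=\delta_x$ for points of $\Gamma$.

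\emph{Nonvanishing.} If $x\in\Gamma$, then $P(x)=\delta_x\neq0$. If $x\in\Mi$, we apply Proposition~\ref{runge_prescribed_jet} with $N=1$, $a_1=1$, $\xi_1=0$ and $H_1=\frac{q(x)}{n}g_x$, which satisfies \eqref{eq:jet_constraint}. This gives $f$ with $P(x)[f]=1$.

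\emph{Separation.} Suppose $aP(x)=bP(y)$ with $a,b\neq0$, and consider three cases.
\begin{enumerate}
\item If $x,y\in\Gamma$, the identity reads $a\delta_x=b\delta_y$. Testing against bump functions forces $x=y$ and then $a=b$.
\item If $x\in\Gamma$ and $y\in\Mi$, take $f\in\Egam$ supported in a small neighbourhood of $x$ with $f(x)=1$. For this $f$ the left side is $a\ne0$. To get a contradiction, we choose $f$ so that $u_f(y)=0$ as well. We apply Proposition~\ref{runge_prescribed_jet} with $\Gamma$ replaced by a tiny boundary neighbourhood $\Gamma'$ of $x$, prescribing value $0$ at $y$; the proposition allows any smaller nonempty open boundary set. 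Take $f'$ from it, and let $h$ be a bump supported in $\Gamma'$ with $h(x)=1$. If $f'(x)\ne0$, take $f=f'$. Otherwise set $f=f'+\varepsilon h$ after also arranging $u_h(y)=0$, which is again possible by the jet proposition applied to $h$. More simply, take any $f_1\in\mathcal E(\Gamma')$ with $f_1(x)\neq0$. If $u_{f_1}(y)\ne0$, pick $f_2$ supported in $\Gamma'$ away from $x$ with $u_{f_2}(y)=1$, and set $f=f_1-u_{f_1}(y)f_2$. Then $f(x)=f_1(x)\ne0$ and $u_f(y)=0$, contradicting $aP(x)=bP(y)$. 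The symmetric case is identical.
\item If $x,y\in\Mi$ are distinct, apply Proposition~\ref{runge_prescribed_jet} with $N=2$, value $1$ at $x$ and $0$ at $y$ (Hessians chosen to satisfy \eqref{eq:jet_constraint}). Then $a=a\,u_f(x)=b\,u_f(y)=0$, a contradiction.
\end{enumerate}
So $x=y$ in all cases. Then $(a-b)P(x)=0$, and $P(x)\ne0$ gives $a=b$. Injectivity is the case $a=b=1$.

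\emph{Embedding.} $P$ is continuous on all of $M$ by Proposition~\ref{P_basics}, with values in $\mathcal E_s(\Gamma)'$ and hence in $\mathcal D'(\Gamma)$. The main point is the topological embedding property, because $M^\Gamma$ is not compact. Let $P(x_k)\to P(x)$ with $x\in M^\Gamma$. Any subsequential limit $z\in M$ of the $x_k$ satisfies $P(z)=P(x)$ by continuity on $M$. Suppose $z\in\p M\setminus\Gamma$. Then $P(z)=0$, so $P(x)=0$, contradicting nonvanishing. Hence $z\in M^\Gamma$, and injectivity gives $z=x$. Compactness of $M$ then gives $x_k\to x$, so $P^{-1}$ is continuous on the image.

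The step I expect to need the most care is the mixed boundary–interior case. There one has to localize the datum near $x$ while still controlling the interior value, which is exactly why the remark allowing $\Gamma$ to shrink in Proposition~\ref{runge_prescribed_jet} is used.
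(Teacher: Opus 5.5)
Your proposal is correct and is essentially the paper's proof. It uses prescribed jets for nonvanishing and for distinct interior points, non-proportional Dirac masses on $\Gamma$, a datum supported in $\Gamma\setminus\{x\}$ with value one at the interior point for the mixed case, and compactness of $M$ together with $P=0$ on $\partial M\setminus\Gamma$ for the embedding. In the mixed case your $f_2$ alone already gives $a\cdot 0=b\cdot 1$, so the combination $f_1-u_{f_1}(y)f_2$ and the abandoned first attempt with $h$ can be dropped.

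The one point to tighten is that $\mathcal D'(\Gamma)$ is not metrizable, so sequential continuity of $P^{-1}$ does not by itself give continuity. You can fix this in any of three ways: rephrase the argument with nets; note that $P(M)$ is metrizable as a Hausdorff continuous image of the compact metric space $M$; or use the paper's direct argument that $P(M\setminus W)$ is compact, hence closed, and does not contain $P(x)$.
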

\begin{proof}
At an interior point, Proposition~\ref{runge_prescribed_jet} supplies a solution with value one. At a boundary point in $\Gamma$, choose a boundary datum with value one there. Thus $P(x)\ne0$.

Suppose first that $x\ne y$ are interior points. The prescribed-jet proposition gives a single boundary datum with $u_f(x)=0$ and $u_f(y)=1$, contradicting the displayed equality. If both points belong to $\Gamma$, their Dirac distributions are not proportional. If $x\in\Gamma$ and $y\in\Mi$, choose a nonempty open set $\Sigma\Subset\Gamma\setminus\{x\}$. Applying Proposition~\ref{runge_prescribed_jet} with boundary values supported in $\Sigma$ gives $u_f(y)=1$ while $u_f(x)=f(x)=0$, again a contradiction. Interchanging $x$ and $y$ handles the other mixed case. Once $x=y$, nonvanishing of $P(x)$ implies $a=b$.

To verify the topological assertion, extend $P$ to all of the compact manifold $M$ as above. If $x\in M^\Gamma$ and $W$ is an open neighborhood of $x$ in $M$, then $P(x)$ does not belong to $P(M\setminus W)$: separation applies at all points of $M^\Gamma$, and the remaining points have image zero. The set $P(M\setminus W)$ is compact, hence closed in the Hausdorff space of distributions. Its complement is a neighborhood of $P(x)$ whose inverse image lies in $W$. This proves continuity of the inverse on $P(M^\Gamma)$.
\end{proof}

\subsection{Nonzero normal derivatives}
We also need a solution whose boundary value is supported away from a given boundary point and whose normal derivative is nonzero there.

\begin{lem}\label{lem:remote_normal}
Let $y\in\p M$ and let $\Sigma\subset\p M$ be nonempty and open, with $y\notin\Sigma$. Then the functional
\[
 N_y:C_c^\infty(\Sigma;\R)\longrightarrow\R,
 \quad N_yf=\p_{\nu_g}u_f(y),
\]
is not zero. In particular, for some such $f$ the solution $u_f$ vanishes on a boundary neighborhood of $y$ and has $du_f(y)\ne0$.
\end{lem}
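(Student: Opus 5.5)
We argue by contradiction via a Green function / duality argument. Suppose $N_yf=0$ for every $f\in C_c^\infty(\Sigma;\R)$. The plan is to represent $N_y$ through a solution of the adjoint problem (which is the same operator, since $L_{g,q}$ is formally self-adjoint) with a singularity concentrated at $y$, and then use unique continuation from $\Sigma$.

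Concretely, let $G$ be the Dirichlet Green function of $L_{g,q}$. The Poisson kernel is $K(x,z)=-\p_{\nu_z}G(x,z)$ for $z\in\p M$, so that $u_f(x)=\int_{\p M}K(x,z)f(z)\dd S(z)$. The vanishing of $N_y$ on $C_c^\infty(\Sigma)$ says that the function
\[
 z\mapsto \p_{\nu_x}\p_{\nu_z}G(x,z)\big|_{x=y}
\]
vanishes on $\Sigma$. Since $y\notin\overline{\Sigma'}$ for any $\Sigma'\Subset\Sigma$ containing the points considered, this kernel is smooth there.

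A cleaner way to organize this, which avoids the Green function, is as follows. Introduce $w_\epsilon$ solving $L_{g,q}w_\epsilon=0$ in $\Mi$ with Dirichlet data $h_\epsilon$, where $h_\epsilon\to\delta_y$ is supported in a small neighborhood $B_\epsilon$ of $y$ disjoint from $\Sigma$. Green's identity gives
\[
 \int_{\p M} f\,\p_\nu w_\epsilon=\int_{\p M}h_\epsilon\,\p_\nu u_f,
\]
since both functions solve the same self-adjoint equation. Passing to the limit, the right side tends to $N_yf$, while the left side tends to $\int_\Sigma f\,\p_\nu w$. Here $w=\lim w_\epsilon$ is the solution with Dirichlet data $\delta_y$, i.e. $w(x)=K(x,y)$, which is smooth near $\Sigma$ with convergence in $C^\infty$ away from $y$ by elliptic regularity. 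Thus $N_y\equiv0$ on $C_c^\infty(\Sigma)$ means $\p_\nu w=0$ on $\Sigma$; also $w=0$ on $\Sigma$, because its boundary data are supported at $y$.

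Unique continuation from the boundary patch $\Sigma$ (as in \cite[Section~3.3]{Isakov_book}), together with connectedness of $M$ and interior unique continuation, then gives $w\equiv0$ on $\Mi$. This contradicts the nontriviality of $w$: $w$ is the Poisson kernel at $y$, and pairing it with any $\phi\in C^\infty(\p M)$ yields $u_\phi(y)$ behaviour at the boundary. More precisely, $\int_{\p M} w(x)\ldots$ is not the right test. Instead we use the singularity: near $y$ the Poisson kernel satisfies $K(x,y)\sim c_n\,d(x,\p M)\,|x-y|^{-n}$ with $c_n\neq 0$, which is not identically zero. Equivalently, $w_\epsilon\to w$ in $\mathcal D'(\Mi)$, and testing against $L_{g,q}\psi$ with $\psi\in H^2\cap H^1_0$ gives $\int w\,L_{g,q}\psi=-\p_\nu\psi(y)$ up to sign. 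Choosing $\psi$ with $\p_\nu\psi(y)\neq0$ and using the invertibility of the Dirichlet realization shows that $w\neq0$.

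The final assertion then follows. Pick $f$ with $N_yf\ne0$. Since $\operatorname{supp}f\subset\Sigma$ and $y\notin\Sigma$, we have $u_f=0$ on a boundary neighborhood of $y$ (shrinking $\Sigma$ slightly so that $y\notin\overline\Sigma$ if necessary; the functional on a smaller set is already nonzero by the same argument), and $du_f(y)=(\p_\nu u_f(y))\,\nu^\flat\ne0$. The main obstacle is justifying the limit $\epsilon\to0$ rigorously, in particular the distributional identity characterizing $w$ and its nontriviality. I would handle this by defining $w$ directly as the distribution $\psi\mapsto-\p_\nu(L^{-1}\psi)(y)$ and applying elliptic regularity away from $y$.
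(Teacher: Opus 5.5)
Your strategy is the paper's. Your $w=K(\cdot,y)$ is the paper's $K_y=-\p_{\nu_y}G(\cdot,y)$. Your Green-identity limit reproduces the paper's formula $N_yf=\int_\Sigma \p_{\nu_z}K_y\,f\dd S_g$. The contradiction is the same: zero Cauchy data on $\Sigma$, then boundary and interior unique continuation.

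\textbf{The gap: nontriviality of $w$.} You flag this step as the main obstacle, but none of your three routes closes it.
\begin{enumerate}
\item The asymptotic $K(x,y)\sim c_n\,d(x,\p M)|x-y|^{-n}$ is only asserted. For a variable metric and a sign-indefinite potential it needs a parametrix argument.
\item The identity $\int w\,L_{g,q}\psi=\mp\p_\nu\psi(y)$ with $\psi\in H^2\cap H^1_0$ is not meaningful as written. For such $\psi$, $\p_\nu\psi$ is only an $H^{1/2}$ trace, so $\p_\nu\psi(y)$ has no pointwise value. Moreover, a distribution in $\mathcal D'(\Mi)$ can only be tested against compactly supported functions.
\item Defining $w$ as the distribution $\phi\mapsto-\p_\nu(L_{g,q,D}^{-1}\phi)(y)$ on $C_c^\infty(\Mi)$ only restates the problem. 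Nontriviality then means exhibiting some $\phi\in C_c^\infty(\Mi)$ with $\p_\nu(L_{g,q,D}^{-1}\phi)(y)\neq0$. Invertibility of the Dirichlet realization does not give this by itself: the natural $\psi$ (a boundary defining function times a cutoff) has $L_{g,q}\psi$ supported up to the boundary.
\end{enumerate}

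\textbf{The missing step} is a density argument, and it is exactly the paper's.
\begin{enumerate}
\item Take smooth $\psi$ with $\psi|_{\p M}=0$ and $\p_\nu\psi(y)\neq0$. Fix $r>n$, and choose $\phi_m\in C_c^\infty(\Mi)$ with $\phi_m\to L_{g,q}\psi$ in $L^r(M)$.
\item By invertibility and $L^r$ elliptic estimates, $L_{g,q,D}^{-1}\phi_m\to\psi$ in $W^{2,r}(M)$, hence in $C^1(M)$.
\item Symmetry of the Green kernel gives $\p_\nu(L_{g,q,D}^{-1}\phi_m)(y)=-\int_M K_y\phi_m\dd V_g$.
\item These numbers converge to $\p_\nu\psi(y)\neq0$, which is impossible if $K_y\equiv0$ in $\Mi$.
\end{enumerate}
This argument needs no integrability or asymptotics of $K_y$ near $y$, only its vanishing on the compact supports of the $\phi_m$. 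With it inserted your proof is complete. The rest, including the final claim that $du_f(y)=\p_\nu u_f(y)\,\nu^\flat\neq0$ with $u_f$ vanishing near $y$ on the boundary, is correct.
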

\begin{proof}
Let $G(x,z)$ be the Dirichlet Green kernel of $L_{g,q}$ relative to the Riemannian volume measure. It exists by invertibility, is real and symmetric, and is smooth up to the boundary away from its diagonal. The Poisson representation is
\begin{equation}\label{eq:poisson_kernel_representation}
 u_f(x)=\int_{\p M}K(x,z)f(z)\dd S_g(z),
 \quad K(x,z)=-\p_{\nu_z}G(x,z).
\end{equation}
These are the usual Green-kernel consequences of elliptic Dirichlet regularity; they apply to an invertible real Schr\"odinger operator without a sign assumption on $q$.

For the fixed point $y$, set $K_y(x)=-\p_{\nu_y}G(x,y)$ for $x\in\Mi$. This function solves $L_{g,q}K_y=0$ in $\Mi$ and extends smoothly to each boundary patch not containing $y$, with zero Dirichlet value there. Symmetry of $G$ and differentiation of \eqref{eq:poisson_kernel_representation} off the diagonal give
\[
 N_yf=\int_\Sigma\p_{\nu_z}K_y(z)f(z)\dd S_g(z).
\]
If $N_y$ were zero, $K_y$ would have zero Dirichlet and Neumann data on $\Sigma$. Boundary and interior unique continuation would then give $K_y=0$ throughout the connected interior.

To see explicitly that $K_y$ cannot vanish identically, choose $w\in C^\infty(M)$ with $w|_{\p M}=0$ and $\p_{\nu_g}w(y)\ne0$. A boundary defining function times a cutoff provides such a $w$. Fix $r>n$, and choose $F_m\in C_c^\infty(\Mi)$ converging to $L_{g,q}w$ in $L^r(M)$. For the zero-Dirichlet solutions $w_m=L_{g,q,D}^{-1}F_m$, elliptic estimates and invertibility give
\[
 w_m\longrightarrow w\quad\text{in }W^{2,r}(M),
 \quad\text{hence in }C^1(M).
\]
On the other hand, symmetry of the Green kernel yields
\[
 \p_{\nu_g}w_m(y)=-\int_M K_y(z)F_m(z)\dd V_g(z).
\]
If $K_y$ were identically zero, these derivatives would all be zero, contradicting their convergence to $\p_{\nu_g}w(y)\ne0$. This proves the lemma. The last assertion follows because $f$ has compact support in $\Sigma$, hence vanishes near $y$, and its nonzero normal derivative is a component of $du_f(y)$.
\end{proof}

\subsection{\texorpdfstring{$Z$}{Z}-coordinates and scaled identifications}
We use the term $Z$-coordinates from the conformal Laplacian construction in~\cite{lassas2016calder}, developed further in~\cite{lassas2019conformal} under the name conformal harmonic coordinates. The definition here applies the same quotient construction to $L_{g,q}$ with a general potential.
\begin{definition}\label{def:Z_coords}
Local coordinates $(Z^1,\ldots,Z^n)$ on a neighborhood $U$ are called $Z$-coordinates for $L_{g,q}$ if
\begin{equation}\label{Z_form1}
 Z^k=\frac{u^k}{u^0},\quad L_{g,q}u^k=L_{g,q}u^0=0,
 \quad u^0>0\text{ on }U.
\end{equation}
At a boundary point, the coordinates are required to be a boundary chart. A denominator that is nonzero at a point may always be made positive on a smaller neighborhood by changing its sign.
\end{definition}

\begin{lem}\label{lem:Z_existence}
Every point of $M^\Gamma$ has $Z$-coordinates formed from global Dirichlet solutions whose boundary values belong to $\Egam$. At an interior point, the denominator can be any prescribed global solution that is nonzero there, after a possible change of sign. If the coefficients are analytic in the interior, these coordinates are analytic at interior points. If the coefficients and the boundary are analytic near a point of $\Gamma$, the coordinates can be chosen analytic up to that boundary point.
\end{lem}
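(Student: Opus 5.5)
The plan is to handle interior points with the prescribed-jet result, Proposition~\ref{runge_prescribed_jet}, and boundary points of $\Gamma$ with Lemma~\ref{lem:remote_normal}. In both cases the key computation is the same: if $u^0(x)\neq0$, then
\[
 d\Bigl(\frac{u^k}{u^0}\Bigr)(x)=\frac{du^k(x)}{u^0(x)}-\frac{u^k(x)}{(u^0(x))^2}\,du^0(x).
\]
If we arrange $u^k(x)=0$, then $dZ^k(x)=du^k(x)/u^0(x)$. Independence of the $dZ^k(x)$ therefore reduces to independence of the $du^k(x)$, and the inverse function theorem gives coordinates near $x$.

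\emph{Interior point $x\in\Mi$.} Let $u^0=u_{f_0}$ be any prescribed global solution with $u^0(x)\neq0$, and replace $f_0$ by $-f_0$ if necessary so that $u^0(x)>0$. Fix a basis $\xi_1,\ldots,\xi_n$ of $T_x^*M$. For each $k$, apply Proposition~\ref{runge_prescribed_jet} with the single point $x$, value $a=0$, covector $\xi_k$ and Hessian $H=0$. The constraint \eqref{eq:jet_constraint} holds trivially because $a=0$. This yields $f_k\in\Egam$ with $u^k(x)=0$ and $du^k(x)=\xi_k$. The formula above gives $dZ^k(x)=\xi_k/u^0(x)$, so the quotients $Z^k=u^k/u^0$ form coordinates on a neighborhood $U$ where $u^0>0$. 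If the coefficients are analytic in the interior, Proposition~\ref{P_basics} makes every $u^k$ analytic there, so $Z$ is analytic. (If no denominator is prescribed, Proposition~\ref{runge_prescribed_jet} supplies one with $u^0(x)=1$.)

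\emph{Boundary point $x_0\in\Gamma$.} For the denominator, choose $f_0\in\Egam$ equal to $1$ near $x_0$, so $u^0(x_0)=1>0$.

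For the tangential functions, fix boundary coordinates $(y^1,\ldots,y^{n-1})$ near $x_0$ with $y(x_0)=0$, and take $f_\alpha=\chi y^\alpha$, where $\chi\in\Egam$ is a cutoff equal to one near $x_0$. Then $Z^\alpha=y^\alpha$ on $\p M$ near $x_0$, because $u^0=1$ there. Hence the $dZ^\alpha(x_0)$, $1\le\alpha\le n-1$, restrict to independent covectors on $T_{x_0}\p M$.

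For the normal function, choose a nonempty open $\Sigma\Subset\Gamma\setminus\{x_0\}$. Lemma~\ref{lem:remote_normal} gives $f_n\in C_c^\infty(\Sigma)$ such that $u^n$ vanishes on a boundary neighborhood of $x_0$ and $\p_{\nu}u^n(x_0)\neq0$. Replacing $f_n$ by $-f_n$ if needed, $Z^n=u^n/u^0$ vanishes on $\p M$ near $x_0$ and is positive just inside. Its differential at $x_0$ is a nonzero multiple of the conormal, since $u^n(x_0)=0$.

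The $n$ differentials $dZ^1(x_0),\ldots,dZ^n(x_0)$ are therefore independent. By the inverse function theorem for manifolds with boundary, $(Z^1,\ldots,Z^n)$ maps a neighborhood of $x_0$ diffeomorphically onto a relatively open subset of the half-space $\{Z^n\ge0\}$, with $\p M$ corresponding to $\{Z^n=0\}$. So these functions form a boundary chart, with $u^0>0$ after shrinking the neighborhood.

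\emph{Analyticity near $\Gamma$.} If the coefficients and the boundary are analytic near $x_0$, choose the $y^\alpha$ analytic. The data $1$, $y^\alpha$ and $0$ are then analytic on a boundary neighborhood of $x_0$, so \cite[Theorem~A]{MorreyNirenberg1957} makes $u^0,\ldots,u^n$ analytic up to the boundary near $x_0$, as in Lemma~\ref{lemma_boundarydetermination}. The chart $Z$ is then analytic, and so is its inverse by the analytic inverse function theorem.

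The expected main obstacle is the normal direction at the boundary: no boundary datum supported near $x_0$ can be used to produce a solution vanishing on the boundary with nonzero normal derivative in a controlled way. This is exactly what Lemma~\ref{lem:remote_normal} provides, and it is why the datum $f_n$ is taken with support in $\Sigma$, away from $x_0$.
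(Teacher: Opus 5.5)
Your proposal is correct and follows the paper's proof essentially step for step. In the interior you use prescribed jets with zero value for the numerators, and at a point of $\Gamma$ you use the data $1$ and $y^\alpha$ near the point together with a remote datum from Lemma~\ref{lem:remote_normal} for the normal coordinate, with \cite{MorreyNirenberg1957} for boundary analyticity; the only cosmetic difference is that the paper prescribes $du_f(x)=v(x)\xi$ so that $d(u_f/v)(x)=\xi$ exactly, whereas you obtain $\xi_k/u^0(x)$.
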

\begin{proof}
Let $x\in\Mi$, and let $v=u_{f_0}$ with $v(x)\ne0$. Given any covector $\xi\in T_x^*M$, Proposition~\ref{runge_prescribed_jet} supplies $u_f$ with
\[
 u_f(x)=0,\quad du_f(x)=v(x)\xi.
\]
It follows that $d(u_f/v)(x)=\xi$. Choosing a basis of covectors and applying the inverse function theorem gives $Z$-coordinates. Analytic interior regularity gives the analytic assertion.

Now let $x\in\Gamma$, and choose boundary coordinates $y^1,\ldots,y^{n-1}$ near $x$, centered at $x$. Choose boundary data $f_0,f_1,\ldots,f_{n-1}$ supported in $\Gamma$ such that, on a neighborhood of $x$,
\[
 f_0=1,\quad f_\alpha=y^\alpha.
\]
Choose a nonempty open $\Sigma\Subset\Gamma\setminus\{x\}$. By Lemma~\ref{lem:remote_normal}, there is $f_n\in C_c^\infty(\Sigma)$ for which $\p_{\nu_g}u_{f_n}(x)\ne0$. The function $u_{f_n}$ is zero on a boundary neighborhood of $x$. The ratios $u_{f_\alpha}/u_{f_0}$ restrict to $y^\alpha$ on the boundary, while $u_{f_n}/u_{f_0}$ vanishes there and has nonzero normal derivative. Their differentials are linearly independent. After changing the sign of the last numerator, its ratio is positive on the interior side of a small neighborhood, so the inverse function theorem gives a boundary chart. The denominator is positive near $x$ by continuity. In the analytic case, choose analytic boundary coordinates $y^\alpha$; all these boundary data are locally analytic at $x$, and analytic boundary regularity~\cite[Theorem~A]{MorreyNirenberg1957} gives the conclusion.
\end{proof}

The differential $DP_x$ is injective at every $x\in M^\Gamma$. If $DP_xX=0$, then $du_f(x)[X]=0$ for every $f\in\Egam$ by~\eqref{formula_for_the_differential}. All differentials of the quotient coordinates constructed in Lemma~\ref{lem:Z_existence} therefore vanish on $X$, so $X=0$. Together with Propositions~\ref{P_basics} and~\ref{prop:projective_separation}, this proves that $P$ is a smooth embedding.

We will repeatedly use the following consequence of the interior coordinate construction:
\begin{equation}\label{eq:quotient_differentials}
 \left\{d(u_f/v)(x):f\in\Egam\right\}=T_x^*M
 \quad\text{if }x\in\Mi\text{ and }v(x)\ne0.
\end{equation}

\begin{lem}[Regularity of a scaled identification]\label{J_is_local_dif}
Let $B\subset\Mji1$ be open. Suppose that $J:B\to\Mji2$ is continuous and that $c:B\to(0,\infty)$ satisfies
\begin{equation}\label{p1_p2_relation}
 u_f^1(x)=c(x)u_f^2(J(x))
 \quad (x\in B,\ f\in\Egam).
\end{equation}
Then $J$ is a smooth diffeomorphism onto its image and $c$ is smooth. If both pairs of coefficients are analytic in the interior, $J$ and $c$ are analytic.
\end{lem}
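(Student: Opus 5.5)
The idea is to express $J$ locally in $Z$-coordinates, where the unknown factor $c$ cancels. Fix $x_0\in B$ and set $y_0=J(x_0)\in\Mji2$. By Proposition~\ref{prop:projective_separation} we have $P_2(y_0)\neq0$, so there is $f_0\in\Egam$ with $u_{f_0}^2(y_0)\neq0$. By Lemma~\ref{lem:Z_existence}, applied on $M_2$ with denominator $v_2=u^2_{f_0}$ (after a change of sign if needed), there are $f_1,\dots,f_n\in\Egam$ such that
\[
 Z_2=(u^2_{f_1},\dots,u^2_{f_n})/u^2_{f_0}
\]
are coordinates near $y_0$. Since $J$ is continuous, there is a neighborhood $B_0\subset B$ of $x_0$ with $J(B_0)$ inside this chart and $u^2_{f_0}\circ J\neq0$ on $B_0$. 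Relation \eqref{p1_p2_relation} with $f=f_0$ then gives $u^1_{f_0}=c\,(u^2_{f_0}\circ J)\neq0$ on $B_0$, because $c>0$.

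Dividing \eqref{p1_p2_relation} for $f_k$ by the one for $f_0$ cancels $c$ and gives
\[
 Z_1:=(u^1_{f_1},\dots,u^1_{f_n})/u^1_{f_0}=Z_2\circ J\quad\text{on }B_0 .
\]
Hence $J=Z_2^{-1}\circ Z_1$ on $B_0$. The map $Z_1$ is smooth, and analytic when the coefficients are analytic, by Proposition~\ref{P_basics}. Therefore $J$ is smooth (respectively analytic) on $B_0$. We also get $c=u^1_{f_0}/(u^2_{f_0}\circ J)$ on $B_0$, so $c$ is smooth (respectively analytic) there as well.

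It remains to show that $J$ is a local diffeomorphism, which is the main step. The functions $Z_1^k$ need not be coordinates on $M_1$, so we argue by the differential. Suppose $DJ_{x}X=0$ for some $x\in B_0$. Differentiating $u^1_f/u^1_{f_0}=(u^2_f/u^2_{f_0})\circ J$, valid for every $f\in\Egam$, gives $d(u^1_f/u^1_{f_0})(x)[X]=0$ for all $f$. By \eqref{eq:quotient_differentials} applied on $M_1$ with $v=u^1_{f_0}$, which is nonzero at $x$, these differentials span $T^*_xM_1$, so $X=0$. Thus $DJ$ is injective, hence invertible since the dimensions agree, and the inverse function theorem (analytic version in the analytic case) shows $J$ is a local diffeomorphism.

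Finally we need injectivity. If $J(x)=J(y)$, then \eqref{p1_p2_relation} gives $c(y)P_1(x)=c(x)P_1(y)$, and Proposition~\ref{prop:projective_separation} forces $x=y$. So $J$ is an injective local diffeomorphism, i.e.\ a diffeomorphism onto its open image.
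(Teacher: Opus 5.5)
Your proof is correct and follows essentially the same route as the paper. You write $J=Z_2^{-1}\circ Z_1$ in $Z$-coordinates, where the multiplier cancels, read off $c=u^1_{f_0}/(u^2_{f_0}\circ J)$, get invertibility of $DJ$ from the spanning property \eqref{eq:quotient_differentials} on $M_1$, and get global injectivity from Proposition~\ref{prop:projective_separation}.
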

\begin{proof}
Fix $x\in B$ and choose $Z$-coordinates
\[
 U=(u_{f_1}^2,\ldots,u_{f_n}^2)/u_{f_0}^2
\]
near $J(x)$. The denominator $u_{f_0}^1$ is nonzero at $x$ by \eqref{p1_p2_relation}. By continuity of $J$, on a small neighborhood of $x$ its image stays in the coordinate neighborhood. Set
\[
 V=(u_{f_1}^1,\ldots,u_{f_n}^1)/u_{f_0}^1.
\]
The multiplier cancels, so $V=U\circ J$ and
\begin{equation}\label{representation_for_J}
 J=U^{-1}\circ V.
\end{equation}
This proves the asserted regularity of $J$, and
\[
 c=\frac{u_{f_0}^1}{u_{f_0}^2\circ J}
\]
proves the regularity of $c$.

For every $f$ we also have
\begin{equation}\label{equation_of_injectivity}
 d\left(\frac{u_f^1}{u_{f_0}^1}\right)_x[X]
 =d\left(\frac{u_f^2}{u_{f_0}^2}\right)_{J(x)}[DJ_xX].
\end{equation}
If $DJ_xX=0$, equation~\eqref{eq:quotient_differentials} implies $X=0$. Thus $DJ_x$ is invertible. Finally, if $J(x)=J(y)$, \eqref{p1_p2_relation} makes $P_1(x)$ and $P_1(y)$ proportional. Proposition~\ref{prop:projective_separation} gives $x=y$. The inverse function theorem now proves that $J$ is a diffeomorphism onto its open image.
\end{proof}

\section{Recovery of the metric and potential from scaled solutions}\label{sec_recovery_of_metric}
In this section we show that a local scaled identification of Dirichlet solutions determines the coefficients. This is the Schr\"odinger counterpart of the metric recovery from harmonic functions. We use the full compatible second-order jet rather than only trace-free Hessians used in \cite{lassas2018poisson}.

\begin{prop}\label{prop:operator_recovery}
Let $(M_j,g_j,q_j)$ satisfy the smooth assumptions of Section~\ref{sec_poisson_embed}. Let $B\subset\Mji1$ be connected and open, let $J:B\to\Mji2$ be a smooth diffeomorphism onto its image, and let $c>0$ be smooth on $B$. Suppose that \eqref{p1_p2_relation} holds for every $f\in\Egam$. Then there is a constant $\kappa>0$ such that
\begin{equation}\label{eq:local_coefficient_recovery}
 J^*g_2=\kappa c^{p-2}g_1,
 \quad
 J^*q_2=\kappa^{-1}c^{1-p}(\Delta_{g_1}c+q_1c)
 \quad\text{on }B.
\end{equation}
More generally, for every locally defined smooth function $v$ on $J(B)$,
\begin{equation}\label{eq:operator_intertwining}
 L_{g_1,q_1}(c\,J^*v)
 =\kappa c^{p-1}J^*(L_{g_2,q_2}v).
\end{equation}
If $J^*g_2=c^{p-2}g_1$ at one point of $B$, then $\kappa=1$.
\end{prop}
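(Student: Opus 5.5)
We work pointwise at a fixed $x\in B$ and write $y=J(x)$. The idea is to compare the two operators acting on the family of functions $v=u_f^2$, using the prescribed-jet result (Proposition~\ref{runge_prescribed_jet}) on the target side. Write $\tilde v=c\,J^*v$ for a smooth function $v$ defined near $y$. Then $L_{g_1,q_1}\tilde v$ is a second-order differential operator in $v$ evaluated at $x$:
\[
 L_{g_1,q_1}(cJ^*v)(x)=A(x)\big[(J^*\Hess_{g_2}v)_x,\,d(J^*v)_x,\,(J^*v)(x)\big],
\]
linear in the $2$-jet of $v$ at $y$. Expanding $\Delta_{g_1}(c\,J^*v)$ with the product rule, the second-order part is $-c\,\tr_{g_1}\bigl(\Hess_{g_1}(J^*v)\bigr)$. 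Its top-order symbol is $-c\,\tr_{g_1}(J^*\Hess_{g_2}v)$, plus first-order corrections coming from the difference of the connections.

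For $v=u_f^2$ the left side vanishes for every $f\in\Egam$, since $cJ^*u_f^2=u_f^1$ is a solution. By Proposition~\ref{runge_prescribed_jet} at the single point $y$, the $2$-jets $(a,\xi,H)$ realized by such $v$ are exactly those with $\tr_{g_2}H=q_2(y)a$. This set is the kernel of the linear functional $\Phi(a,\xi,H)=\tr_{g_2}H-q_2(y)a$, which is $(L_{g_2,q_2}v)(y)$ up to sign. A linear functional on the jet space that vanishes on $\ker\Phi$ must equal $\lambda(x)\Phi$ for some scalar $\lambda(x)$. Applied to arbitrary jets (every $2$-jet is realized by some local smooth $v$), this gives
\[
 L_{g_1,q_1}(cJ^*v)(x)=\lambda(x)\,(L_{g_2,q_2}v)(y).
\]
This is the pointwise form of \eqref{eq:operator_intertwining}.

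Next we read off the coefficients by comparing the parts of this identity of each order. At second order, $c\,\tr_{g_1}(J^*H)=\lambda\,\tr_{g_2}H$ for all symmetric $H$, so $J^*g_2^{-1}$ is proportional to $g_1^{-1}$. Equivalently $J^*g_2=\mu\,g_1$ with $\mu=\lambda/c$ as functions of $x$, and $\mu$ is smooth and positive. Rather than comparing first-order terms by hand, we use conformal covariance. Proposition~\ref{hidden_conformal_invariance} (specifically \eqref{eq:hidden_covariance} and \eqref{eq:covariance_intro}) gives
\[
 L_{g_1,q_1}(c\,w)=c^{p-1}L_{c^{p-2}g_1,\hat q}(w),\qquad \hat q=c^{1-p}(\Delta_{g_1}c+q_1c).
\]
So, with $\hat g=c^{p-2}g_1$ and $w=J^*v$, we get $L_{\hat g,\hat q}(J^*v)=\lambda c^{1-p}J^*(L_{g_2,q_2}v)$. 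By naturality, the right-hand side equals $\lambda c^{1-p}L_{J^*g_2,J^*q_2}(J^*v)$. Since $J$ is a diffeomorphism onto its image, $w=J^*v$ ranges over all local smooth functions. This yields an operator identity $L_{\hat g,\hat q}=\sigma L_{J^*g_2,J^*q_2}$ with $\sigma=\lambda c^{1-p}$.

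Matching principal symbols gives $J^*g_2=\sigma\hat g$. Now apply \eqref{eq:conformal_laplacian} with conformal factor $\sigma$: the Laplacian $\Delta_{\sigma\hat g}$ contains the first-order term $\frac{n-2}{2}\sigma^{-1}\langle d\log\sigma,dw\rangle$, while $\sigma^{-1}\Delta_{\hat g}$ has none beyond those of $\hat g$. Equality of first-order terms therefore forces $d\sigma=0$ because $n\ge3$. Connectedness of $B$ then makes $\sigma=\kappa$ constant. Comparing zeroth-order terms gives $\hat q=\kappa J^*q_2$, which is \eqref{eq:local_coefficient_recovery}. The identity $\lambda=\kappa c^{p-1}$ gives \eqref{eq:operator_intertwining}. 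Finally, if $J^*g_2=c^{p-2}g_1$ at one point, then $\kappa=1$ there, hence everywhere.

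The main obstacle is the linear-algebra step: it requires that the realizable jets fill exactly the hyperplane $\ker\Phi$. This is precisely what Proposition~\ref{runge_prescribed_jet} at one point provides, including the value and first-derivative components. The other point to get right is justifying that the intertwining holds for arbitrary $v$ and not only for solutions, which the pointwise jet argument handles.
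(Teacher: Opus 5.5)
Your proposal is correct and follows essentially the paper's route: prescribed jets at $J(x)$ force pointwise proportionality of the two jet functionals, the principal part gives conformality, and the first-order terms plus connectedness of $B$ give the constant $\kappa$. The only difference is cosmetic. You first absorb $c$ through the covariance identity $L_{g_1,q_1}(c\,w)=c^{p-1}L_{c^{p-2}g_1,\hat q}(w)$, so the first-order comparison reads $d\sigma=0$, whereas the paper compares $c^{-1}L_{g_1,q_1}(c\,\cdot)$ directly with $\mu L_{J^*g_2,J^*q_2}$ and obtains $2\,d\log c=\frac{n-2}{2}\,d\log\mu$.
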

\begin{proof}
Write $h=J^*g_2$ and $r=J^*q_2$. The functions $v_f=u_f^2\circ J$ satisfy $L_{h,r}v_f=0$. By the hypothesis, $L_{g_1,q_1}(cv_f)=0$. Define an operator acting on functions on $B$ by
\begin{equation}\label{eq:conjugated_operator}
 Av=c^{-1}L_{g_1,q_1}(cv)
 =\Delta_{g_1}v-2\pair{d\log c}{dv}_{g_1}
    +\bigl(c^{-1}L_{g_1,q_1}c\bigr)v.
\end{equation}
Thus $Av_f=0$ for every $f\in\Egam$.

Fix $x\in B$. Evaluation of $L_{h,r}$ at $x$ is a nonzero linear functional on the finite-dimensional space of second-order jets of functions at $x$. Proposition~\ref{runge_prescribed_jet} on $M_2$, transported by the local diffeomorphism $J$, says that the jets of the functions $v_f$ fill exactly the kernel of this functional. Since evaluation of $A$ vanishes on that kernel, there is a scalar $\mu(x)$ such that, as functionals on all second-order jets,
\[
 A|_x=\mu(x)L_{h,r}|_x.
\]
Comparing the principal coefficients gives
\[
 g_1^{ab}=\mu h^{ab},\quad h=\mu g_1.
\]
Positive definiteness gives $\mu>0$, and, for example,
$\mu=n^{-1}\tr_{g_1}h$ shows that $\mu$ is smooth. Since the jet identity holds at every point, it is the differential-operator identity
\begin{equation}\label{eq:A_mu_L}
 A=\mu L_{h,r}\quad\text{on }B.
\end{equation}

Using \eqref{eq:conformal_laplacian} with $h=\mu g_1$, the right-hand side of \eqref{eq:A_mu_L} is
\[
 \Delta_{g_1}v-\frac{n-2}{2}\pair{d\log\mu}{dv}_{g_1}
                  +\mu r v.
\]
Comparison with \eqref{eq:conjugated_operator} gives
\[
 2d\log c=\frac{n-2}{2}d\log\mu,
 \quad c^{-1}L_{g_1,q_1}c=\mu r.
\]
Connectedness of $B$ therefore implies
\[
 \mu=\kappa c^{4/(n-2)}=\kappa c^{p-2}
\]
for one constant $\kappa>0$. This proves \eqref{eq:local_coefficient_recovery}. Multiplying \eqref{eq:A_mu_L} by $c$ and using naturality under pullback proves \eqref{eq:operator_intertwining}. The final assertion follows from the metric identity.
\end{proof}

\begin{cor}[Quotients of local solutions]\label{conf_preserves_Z}
Under the assumptions of Proposition~\ref{prop:operator_recovery}, let $V\subset J(B)$ be open. The map $v\mapsto c\,J^*v$ is a bijection between smooth solutions of $L_{g_2,q_2}v=0$ on $V$ and smooth solutions of $L_{g_1,q_1}u=0$ on $J^{-1}(V)$. In particular, if $v,w$ solve $L_{g_2,q_2}v=L_{g_2,q_2}w=0$ on $V$ and $w>0$, then
\[
 J^*\left(\frac vw\right)
 =\frac{cJ^*v}{cJ^*w}\quad\text{on }J^{-1}(V),
\]
where both numerator and denominator solve $L_{g_1,q_1}u=0$ and the denominator is positive. Thus $J$ preserves quotients of local solutions and pulls back $Z$-coordinates on $V$ to $Z$-coordinates on $J^{-1}(V)$.
\end{cor}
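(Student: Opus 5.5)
The plan is to read the corollary off the intertwining identity \eqref{eq:operator_intertwining} of Proposition~\ref{prop:operator_recovery}. That proposition gives a constant $\kappa>0$ such that, for every smooth $v$ on $V$,
\[
 L_{g_1,q_1}(c\,J^*v)=\kappa c^{p-1}J^*(L_{g_2,q_2}v)\quad\text{on }J^{-1}(V).
\]
The identity is pointwise and local, because both sides are differential operators applied to $v$. So it holds for functions defined only on the open set $V\subset J(B)$; no global extension of $v$ is needed. Since $\kappa c^{p-1}>0$, we get $L_{g_1,q_1}(cJ^*v)=0$ exactly when $J^*(L_{g_2,q_2}v)=0$. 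As $J$ is a diffeomorphism onto its image, this is equivalent to $L_{g_2,q_2}v=0$ on $V$.

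Next I will check bijectivity of $v\mapsto cJ^*v$. The inverse is $u\mapsto (J^{-1})^*(c^{-1}u)$, which is smooth because $c>0$ is smooth and $J^{-1}:J(B)\to B$ is smooth. Given a smooth solution $u$ of $L_{g_1,q_1}u=0$ on $J^{-1}(V)$, put $v=(J^{-1})^*(c^{-1}u)$. Then $cJ^*v=u$, and the equivalence above shows that $v$ solves $L_{g_2,q_2}v=0$. Hence the map is a bijection of solution spaces.

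For the quotient statement, take solutions $v,w$ with $w>0$. Then $cJ^*v$ and $cJ^*w$ are solutions on $J^{-1}(V)$, and $cJ^*w>0$ because $c>0$. The factor $c$ cancels, so
\[
 \frac{cJ^*v}{cJ^*w}=J^*\left(\frac vw\right).
\]
Finally, suppose $Z=(v^1,\dots,v^n)/v^0$ are $Z$-coordinates on $V$ in the sense of Definition~\ref{def:Z_coords}. Then $J^*Z=Z\circ J$ is a coordinate chart on $J^{-1}(V)$, since $J$ is a diffeomorphism. By the computation above, each component of $J^*Z$ is a quotient of solutions of $L_{g_1,q_1}$ with the positive denominator $cJ^*v^0$. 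So $J^*Z$ gives $Z$-coordinates on $J^{-1}(V)$.

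There is no real obstacle here. The only point to check is that the intertwining identity applies to locally defined $v$, and the proposition explicitly states it for every locally defined smooth $v$ on $J(B)$.
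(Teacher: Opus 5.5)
Your proposal is correct and follows the paper's proof. Both apply \eqref{eq:operator_intertwining} on $J^{-1}(V)$, use that $J$ is a diffeomorphism and that $c$ and $\kappa c^{p-1}$ never vanish to identify the solution spaces in both directions, and get the quotient identity by cancelling $c$. You additionally write out the inverse map $u\mapsto (J^{-1})^*(c^{-1}u)$ and the pullback of $Z$-coordinates, which the paper leaves implicit.
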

\begin{proof}
Apply~\eqref{eq:operator_intertwining} on $J^{-1}(V)$. Since $J:J^{-1}(V)\to V$ is a diffeomorphism and the factors $c$ and $\kappa c^{p-1}$ never vanish, it identifies the solution spaces in both directions. The quotient identity follows by cancellation of $c$.
\end{proof}

\begin{rmk}\label{rmk:solution_metrics}
An immediate consequence, needed in the next section, is the following. For a fixed datum $f$, on a region where the corresponding solutions do not vanish, put
\[
 \widehat g_j=|u_f^j|^{p-2}g_j.
\]
Then \eqref{p1_p2_relation} and \eqref{eq:local_coefficient_recovery} give
\begin{equation}\label{eq:solution_metric_homothety}
 J^*\widehat g_2=\kappa\widehat g_1.
\end{equation}
The constant $\kappa$ is the same one as in Proposition~\ref{prop:operator_recovery}; it does not depend on $f$.
\end{rmk}

\section{Global continuation}\label{sec:global}
We use $Z$-coordinates and real analyticity to extend the local scaled identification to the whole interior. We first exclude limits at the unobserved boundary, where the multiplier might become unbounded. Lemma~\ref{lem:no_boundary_escape} is used in the closedness step of Theorem~\ref{thm:global_scaled}; its last paragraph contains the curvature calculation needed when the limiting point is unobserved.

\subsection{Exclusion of boundary limits}
\begin{lem}\label{lem:no_boundary_escape}
Let $(M_j,g_j,q_j)$ satisfy the smooth assumptions of Section~\ref{sec_poisson_embed}. Let $B\subset\Mji1$ be connected and open, let $J:B\to\Mji2$ be a smooth diffeomorphism onto its image, and let $c>0$ be smooth on $B$. Suppose that
\[
 u_f^1=c\,u_f^2\circ J\quad\text{on }B,
 \quad f\in\Egam.
\]
If $x_k\in B$ converges to $x\in\Mji1$, then no subsequence of $J(x_k)$ can converge to a point of $\p M_2$.
\end{lem}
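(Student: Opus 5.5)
Argue by contradiction: after passing to a subsequence, assume $y_k:=J(x_k)\to y\in\p M_2$. The outline points to a split according to whether $y\in\Gamma$ or $y\notin\Gamma$. The observed case is handled by separation of Poisson embeddings. The unobserved case is handled by a curvature comparison for the homothetic solution metrics of Remark~\ref{rmk:solution_metrics}.

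\emph{Normalizing the multiplier.} By Proposition~\ref{runge_prescribed_jet}, pick $f_0$ with $u^1_{f_0}(x)\ne0$. Then for every $f$,
\[
 \frac{u_f^1(x_k)}{u_{f_0}^1(x_k)}=\frac{u_f^2(y_k)}{u_{f_0}^2(y_k)}.
\]
The left-hand sides converge to $u_f^1(x)/u_{f_0}^1(x)$, so the right-hand quotients converge for every $f$.

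\emph{Observed case, $y\in\Gamma$.} Choose $f_0$ equal to one near $y$, so that $u^2_{f_0}(y)=1$. The displayed identity then gives $P_1(x)=\lambda P_2(y)=\lambda\delta_y$ with $\lambda=u_{f_0}^1(x)\ne0$. Now take a datum $f$ supported in a small set $\Sigma\Subset\Gamma\setminus\{y\}$ with $u^1_f(x)\neq0$; it exists by Proposition~\ref{runge_prescribed_jet} applied with boundary values in $\Sigma$. For this $f$ we get $P_1(x)[f]\ne0$, while $\delta_y[f]=f(y)=0$, a contradiction. (Alternatively, split $f$ so that $f_0$ is supported away from $y$ and argue via ratios.) We need only ensure the denominators stay nonzero; a second datum with $u^2_{f_0}(y)\neq0$ handles that, and otherwise we swap the roles of numerator and denominator.

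\emph{Unobserved case, $y\in\p M_2\setminus\Gamma$ (main obstacle).} Here every $u^2_f$ vanishes at $y$, so the ratios are $0/0$ and the multiplier $c(x_k)$ may blow up. By Lemma~\ref{lem:remote_normal} with $\Sigma\Subset\Gamma$, fix $f$ with $u^2_f=0$ on a boundary neighbourhood of $y$ and $du_f^2(y)\ne0$; its zero set is then a smooth hypersurface near $y$ meeting $\p M_2$ transversally. To see that $u^1_f(x)\neq0$, take a second datum $f_1$ with the same property. The ratio $u^2_{f_1}/u^2_f$ extends smoothly near $y$ off the zero set, because both vanish to first order on the boundary. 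Choose $f_1$ so that this ratio has a finite nonzero limit along $y_k$; this is possible by choosing normal derivatives. The identity of ratios then shows that $u^1_f(x)$ and $u^1_{f_1}(x)$ cannot both vanish, and after relabelling $u^1_f(x)\neq0$.

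Set $\widehat g_j=|u^j_f|^{p-2}g_j$ near $x_k$ and $y_k$. On the source side, $\widehat g_1$ is a smooth metric on a fixed neighbourhood of $x$, so its sectional curvatures are bounded near $x_k$. On the target side, the conformal change formula for $e^{2\varphi}g$ with $\varphi=\frac{2}{n-2}\log|u|$ involves $\Hess\varphi-d\varphi\otimes d\varphi+\frac12|d\varphi|^2g$. For a plane tangent to the level sets of $u$ (orthogonal to $du$), the dominant term is $-|d\varphi|^2_{g}e^{-2\varphi}\sim -C|du|^2|u|^{-2-(p-2)}$, which tends to $-\infty$ as $u\to0$ with $du\ne0$. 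The Hessian terms are of order $|u|^{-1}|u|^{-(p-2)}$ and hence lower order. By \eqref{eq:solution_metric_homothety}, $J$ is a homothety with constant factor $\kappa$, so sectional curvatures correspond up to the factor $\kappa^{-1}$ on corresponding planes. Taking planes at $y_k$ tangent to the level sets and pulling them back by $DJ$ gives source curvatures tending to $-\infty$, which contradicts the boundedness near $x$. The delicate point, which I expect to be the hardest step, is verifying $u^1_f(x)\ne0$ and the lower-order bookkeeping in the curvature asymptotics. Notably, no bound on $c$ is needed, since $\kappa$ is constant.
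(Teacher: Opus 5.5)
\textbf{Verdict.} Your overall route is the paper's: the same split into $y\in\Gamma$ and $y\notin\Gamma$, and the same $\delta_y$ argument in the observed case. In the unobserved case you also follow the paper: Lemma~\ref{lem:remote_normal}, the solution metrics $|u^j_f|^{p-2}g_j$ with the homothety \eqref{eq:solution_metric_homothety}, and curvature tending to $-\infty$ on two-planes in $\ker du$. Your asymptotics there agree with the paper's identity $u^{2+2\beta}K_{\widehat g_2}(\Pi)=u^2K_{g_2}(\Pi)-\beta u\bigl(\Hess_{g_2}u(e_1,e_1)+\Hess_{g_2}u(e_2,e_2)\bigr)-\beta^2|du|^2_{g_2}$, with $\beta=2/(n-2)$.

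\textbf{The gap.} The step you flag, $u^1_f(x)\neq0$, is not proved as written. Suppose $f_1$ is chosen so that $u^2_{f_1}(y_k)/u^2_f(y_k)\to L\in\R\setminus\{0\}$. The identity of ratios then only gives $u^1_{f_1}(x)=L\,u^1_f(x)$. So $u^1_f(x)$ and $u^1_{f_1}(x)$ vanish or fail to vanish together. Nothing rules out both being zero, and relabelling cannot help.

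\textbf{The fix.} No special $f_1$ is needed. Every $f'\in\Egam$ has compact support in $\Gamma$ and $y\notin\Gamma$, so $f'$ vanishes near $y$ and $u^2_{f'}=t\,b_{f'}$ there. Hence $u^2_{f'}(y_k)/u^2_f(y_k)\to b_{f'}(y,0)/b_f(y,0)$ is finite for every $f'$. The identity $u^1_{f'}(x_k)=u^1_f(x_k)\,u^2_{f'}(y_k)/u^2_f(y_k)$ then shows: if $u^1_f(x)=0$, then $u^1_{f'}(x)=0$ for all $f'$. That means $P_1(x)=0$, contradicting Proposition~\ref{prop:projective_separation}. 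Equivalently, take $f_1$ to be your normalizing datum $f_0$, which has $u^1_{f_0}(x)\neq0$. With this repair the rest of your argument goes through as in the paper.

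\textbf{Smaller inaccuracies.}
\begin{enumerate}
\item Near $y$ the zero set of $u^2_f$ is the boundary patch itself, since $u^2_f=t\,b_f$ with $b_f(y,0)\neq0$. It is not a hypersurface meeting $\p M_2$ transversally. This is harmless: the curvature argument only needs $u(y_k)\to0$, $du(y)\neq0$, and $n\geq3$ to fit a two-plane into $\ker du$.
\item In the observed case, $\lambda=u^1_{f_0}(x)\neq0$ is not justified once you have redefined $f_0$. It is also not needed, since the datum supported in $\Sigma$ gives a contradiction for any value of $\lambda$.
\end{enumerate}
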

\begin{proof}
Suppose, to the contrary, that $y_k=J(x_k)\to y\in\p M_2$. Proposition~\ref{prop:operator_recovery} gives one constant $\kappa>0$ such that
\begin{equation}\label{eq:escape_metric_relation}
 J^*g_2=\kappa c^{p-2}g_1\quad\text{on }B.
\end{equation}
We consider the observed and unobserved boundary separately.

If $y\in\Gamma$, choose $h\in\Egam$ with $h(y)=1$. Then
\[
 c(x_k)=\frac{u_h^1(x_k)}{u_h^2(y_k)}
 \longrightarrow c_*:=u_h^1(x),
\]
since $u_h^2(y_k)\to1$. Taking the limit in the solution identity gives
\[
 u_f^1(x)=c_*f(y)\quad\text{for all }f\in\Egam.
\]
The scalar $c_*$ cannot be zero, since $P_1(x)\ne0$. Choose a nonempty open $\Sigma\Subset\Gamma\setminus\{y\}$. The prescribed-jet proposition with boundary values supported in $\Sigma$ supplies $f$ with $u_f^1(x)=1$, while $f(y)=0$, a contradiction.

Suppose now that $y\notin\Gamma$. Lemma~\ref{lem:remote_normal} gives $h\in\Egam$ such that
\[
 \p_{\nu_{g_2}}u_h^2(y)\ne0.
\]
The datum $h$ is zero near $y$. Write $t$ for inward $g_2$-normal distance near $y$. We have
\[
 u_h^2=t b_h,\quad b_h(y,0)\ne0.
\]
For any fixed $f\in\Egam$, its datum also vanishes on some neighborhood of $y$, possibly depending on $f$. On that neighborhood $u_f^2=t b_f$, with $b_f$ smooth. It follows that
\begin{equation}\label{eq:boundary_quotient_limit}
 \frac{u_f^2(y_k)}{u_h^2(y_k)}
 \longrightarrow\frac{b_f(y,0)}{b_h(y,0)}.
\end{equation}
For all sufficiently large $k$, cancellation of the multiplier gives
\[
 u_f^1(x_k)=u_h^1(x_k)\frac{u_f^2(y_k)}{u_h^2(y_k)}.
\]
If $u_h^1(x)=0$, equation~\eqref{eq:boundary_quotient_limit} would imply $u_f^1(x)=0$ for every $f$, again contradicting $P_1(x)\ne0$. Therefore $u_h^1(x)\ne0$.

The metric
\[
 \widehat g_1=|u_h^1|^{p-2}g_1
\]
is smooth and nondegenerate on a neighborhood of the interior point $x$. All of its sectional curvatures are uniformly bounded on a smaller neighborhood. The metric
\[
 \widehat g_2=|u_h^2|^{p-2}g_2
\]
is smooth in a small interior neighborhood of $y$, and \eqref{eq:escape_metric_relation} gives
\[
 J^*\widehat g_2=\kappa\widehat g_1
\]
wherever these metrics are used. Thus, for every two-plane $\Pi\subset T_{y_k}M_2$ and all sufficiently large $k$,
\[
 |K_{\widehat g_2}(\Pi)|
 =\kappa^{-1}|K_{\widehat g_1}((DJ_{x_k})^{-1}\Pi)|
 \leq C\kappa^{-1}.
\]
To obtain a contradiction, put $u=|u_h^2|$ and $\beta=2/(n-2)$. Since $u_h^2=t b_h$ and $b_h(y,0)\ne0$, the function $u$ is smooth up to the boundary on a smaller neighborhood, is positive on its interior, and has $du(y)\ne0$. At each $y_k$, choose a $g_2$-orthonormal pair $e_1,e_2\in\ker du(y_k)$ and let $\Pi_k$ be its span. This is possible because $n\geq3$. The sectional curvature formula for $\widehat g_2=u^{2\beta}g_2$, with the convention that the sphere has positive curvature, gives
\begin{align}\label{eq:boundary_curvature_contradiction}
 u^{2+2\beta}K_{\widehat g_2}(\Pi_k)
 ={}&u^2K_{g_2}(\Pi_k)
 -\beta u\bigl(\Hess_{g_2}u(e_1,e_1)+\Hess_{g_2}u(e_2,e_2)\bigr)-\beta^2|du|_{g_2}^2.
\end{align}
All quantities are evaluated at $y_k$. The curvature of $g_2$ and the Hessian of $u$ are bounded uniformly over these orthonormal pairs. The right-hand side therefore tends to $-\beta^2|du(y)|_{g_2}^2<0$. The left-hand side tends to zero by the curvature bound above and $u(y_k)\to0$. This contradiction excludes the unobserved boundary limit.
\end{proof}

\begin{rmk}
The last proof does not assume that $c$ extends to the limiting boundary point. In the unobserved case it first uses the quotient limit to find a nonvanishing solution on the source side, and only then compares the two nondegenerate metrics in the interior. Thus it also covers the possibility that $c(x_k)$ tends to infinity.
\end{rmk}

\subsection{Continuation in \texorpdfstring{$Z$}{Z}-coordinates}
\begin{thm}[Global determination of scaled solutions]\label{thm:global_scaled}
Let $(M_j,g_j,q_j)$, $j=1,2$, satisfy the smooth assumptions of Section~\ref{sec_poisson_embed}, and suppose in addition that the interiors, metrics, and potentials are real analytic. Let $B_0\subset\Mji1$ be a nonempty connected open set. Assume that there are a smooth diffeomorphism $F:B_0\to F(B_0)\subset\Mji2$ and a positive smooth function $c_0$ such that
\begin{equation}\label{eq:global_seed}
 u_f^1=c_0\,u_f^2\circ F
 \quad\text{on }B_0,\quad f\in\Egam.
\end{equation}
Then $F$ and $c_0$ have unique extensions to an analytic diffeomorphism
\[
 J:\Mji1\longrightarrow\Mji2
\]
and a positive analytic function $c$ on $\Mji1$ satisfying
\begin{equation}\label{eq:global_solution_identity}
 u_f^1=c\,u_f^2\circ J
 \quad\text{on }\Mji1,\quad f\in\Egam.
\end{equation}
Consequently,
\begin{equation}\label{eq:global_scaled_images}
 P_1(\Mji1)=\bigl\{(c\circ J^{-1})(y)P_2(y):y\in\Mji2\bigr\}.
\end{equation}
\end{thm}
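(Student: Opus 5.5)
We extend the seed by an open-and-closed argument over a maximal family of scaled identifications. Consider the set $\mathcal S$ of pairs $(B,J,c)$ where $B\subset\Mji1$ is connected open containing $B_0$, $J:B\to\Mji2$ is continuous, $c>0$, and the identity $u_f^1=c\,u_f^2\circ J$ holds on $B$ for all $f\in\Egam$. By Lemma~\ref{J_is_local_dif}, every such $J$ is an analytic diffeomorphism onto its open image and $c$ is analytic.

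\emph{Uniqueness and compatibility.} Suppose $(J,c)$ and $(J',c')$ both satisfy the identity at a point $x$. Then $c(x)P_2(J(x))=c'(x)P_2(J'(x))$, so Proposition~\ref{prop:projective_separation} (applied on $M_2$) gives $J(x)=J'(x)$ and $c(x)=c'(x)$. Hence any two elements of $\mathcal S$ agree on the intersection of their domains, and the union of all of them is again an element of $\mathcal S$. Let $(B,J,c)$ denote this maximal element. The same argument gives uniqueness of the extension. It remains to show $B=\Mji1$, which by connectedness of $\Mji1$ reduces to showing $B$ is relatively closed.

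\emph{Closedness (main step).} Let $x_k\in B$ with $x_k\to x\in\Mji1$. By compactness of $M_2$, a subsequence $J(x_k)$ converges to some $y\in M_2$, and Lemma~\ref{lem:no_boundary_escape} forces $y\in\Mji2$. Choose a $Z$-coordinate chart $U=(u^2_{f_1},\dots,u^2_{f_n})/u^2_{f_0}$ on a neighborhood $W$ of $y$ (Lemma~\ref{lem:Z_existence}), with $u^2_{f_0}(y)\neq0$. First we argue that $u^1_{f_0}(x)\ne0$: otherwise, passing to the limit in $u_f^1(x_k)=u^1_{f_0}(x_k)\,u_f^2(y_k)/u^2_{f_0}(y_k)$ would give $P_1(x)=0$, which is impossible. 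So on a small connected neighborhood $D$ of $x$, the map $V=(u^1_{f_1},\dots,u^1_{f_n})/u^1_{f_0}$ is analytic. For large $k$ we have $V(x_k)=U(J(x_k))$, and the identity $V=U\circ J$ holds on the component of $J^{-1}(W)\cap D\cap B$ containing $x_k$. Define $\tilde J=U^{-1}\circ V$ on a neighborhood $D'\subset D$ of $x$ with $V(D')\subset U(W)$; this is possible because $V(x)=\lim U(J(x_k))=U(y)$. Also set $\tilde c=u^1_{f_0}/(u^2_{f_0}\circ\tilde J)$. On the open set where $\tilde J=J$, the identity $u_f^1=\tilde c\,u_f^2\circ\tilde J$ holds. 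Both sides are analytic in $x$ on the connected set $D'$ (taken to be a ball), so by analytic continuation the identity holds on all of $D'$ for each $f$. One point needs care: $\tilde c$ might fail to be positive, or $\tilde J$ might fail to be well defined. Positivity follows because $\tilde c=c$ near $x_k$ and $\tilde c$ is nonvanishing on $D'$, which we get by shrinking $D'$ so that $u^2_{f_0}\circ\tilde J\ne0$ and $u^1_{f_0}\ne0$ there. Then $(B\cup D',\dots)$ lies in $\mathcal S$, since the gluing is consistent by the compatibility step, and maximality gives $x\in B$.

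\emph{Surjectivity and the image formula.} Apply the same construction with the roles of $M_1$ and $M_2$ exchanged, seeded by $(J(B_0),J^{-1},1/c\circ J^{-1})$. This yields a global $\tilde J:\Mji2\to\Mji1$ with $u_f^2=\tilde c\,u_f^1\circ\tilde J$. The composition $\tilde J\circ J$ satisfies a scaled identity with itself on $\Mji1$, so projective separation gives $\tilde J\circ J=\Id$; symmetrically $J\circ\tilde J=\Id$. Hence $J$ is an analytic diffeomorphism, and \eqref{eq:global_scaled_images} follows directly by substituting $y=J(x)$. The main obstacle I expect is the closedness step, in particular ensuring the limit lands in the interior. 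This is exactly what Lemma~\ref{lem:no_boundary_escape} provides, and it is why that lemma is needed before the continuation argument.
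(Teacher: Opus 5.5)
Your proposal is correct and follows the paper's proof essentially step for step. Both arguments glue a maximal extension via Proposition~\ref{prop:projective_separation}, prove closedness using Lemma~\ref{lem:no_boundary_escape}, a $Z$-chart $\wt J=U^{-1}\circ V$ and analytic continuation of the quotient identity, and obtain surjectivity by swapping the roles of $M_1$ and $M_2$. The only difference is cosmetic: you get $u^1_{f_0}(x)\neq0$ from a quotient limit and positivity of $\wt c$ from connectedness, whereas the paper normalizes $u^2_{f_0}(y)=1$ and passes to the limit in $c(x_k)$.
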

\begin{proof}
Lemma~\ref{J_is_local_dif} shows first that the initial $F$ and $c_0$ are analytic. Consider all connected open subsets of $\Mji1$ containing $B_0$ on which they extend with \eqref{eq:global_solution_identity}. Two such extensions agree on every part of their overlap: if $c(x)P_2(J(x))=\wt c(x)P_2(\wt J(x))$, Proposition~\ref{prop:projective_separation} identifies both the point and the multiplier. Hence they glue to a largest connected open set $B\subset\Mji1$, an analytic positive $c$ on $B$, and an analytic map $J$ satisfying the solution identity. The map is a diffeomorphism onto its image by Lemma~\ref{J_is_local_dif}.

We show that $B$ is closed relative to $\Mji1$. Let $x_k\in B$ and $x_k\to x\in\Mji1$. By compactness of $M_2$, after passing to a subsequence we may assume
\[
 y_k=J(x_k)\longrightarrow y\in M_2.
\]
Lemma~\ref{lem:no_boundary_escape} gives $y\in\Mji2$.

Choose $f_0\in\Egam$ such that $u_{f_0}^2(y)=1$. For large $k$,
\[
 c(x_k)=\frac{u_{f_0}^1(x_k)}{u_{f_0}^2(y_k)}
 \longrightarrow a:=u_{f_0}^1(x).
\]
The limit is nonnegative. Taking limits for all other data gives
\begin{equation}\label{eq:limit_scaled_identity}
 u_f^1(x)=a\,u_f^2(y)\quad(f\in\Egam).
\end{equation}
If $a=0$, all solution values at $x$ vanish, which is impossible. Therefore $a>0$.

By Lemma~\ref{lem:Z_existence}, choose numerators $f_1,\ldots,f_n$ so that
\[
 U=\frac{(u_{f_1}^2,\ldots,u_{f_n}^2)}{u_{f_0}^2}
\]
is an analytic coordinate system on a neighborhood $\Omega_2$ of $y$. Shrink it so that $u_{f_0}^2>0$ there. Since $u_{f_0}^1(x)=a>0$, the map
\[
 V=\frac{(u_{f_1}^1,\ldots,u_{f_n}^1)}{u_{f_0}^1}
\]
is analytic on a neighborhood of $x$, although it is not yet known to be a coordinate system. Equation~\eqref{eq:limit_scaled_identity} gives $V(x)=U(y)$. Choose a connected neighborhood $\Omega_1$ of $x$ so small that $u_{f_0}^1>0$ there and $V(\Omega_1)\subset U(\Omega_2)$, and define
\begin{equation}\label{eq:extension_F}
 \wt J=U^{-1}\circ V\quad\text{on }\Omega_1.
\end{equation}
This is an analytic map into $\Omega_2$.

For some sufficiently large $k$, both $x_k\in\Omega_1$ and $J(x_k)\in\Omega_2$. Continuity of $J$ on $B$ gives a nonempty open neighborhood $W\subset B\cap\Omega_1$ of $x_k$ with $J(W)\subset\Omega_2$. On $W$ the solution identity implies $V=U\circ J$, hence $\wt J=J$. For every $f\in\Egam$ we consequently have
\[
 \frac{u_f^1}{u_{f_0}^1}
 =\left(\frac{u_f^2}{u_{f_0}^2}\right)\circ\wt J
 \quad\text{on }W.
\]
Both sides are real analytic on the connected set $\Omega_1$. The identity theorem therefore gives
\begin{equation}\label{eq:continued_quotient_identity}
 \frac{u_f^1}{u_{f_0}^1}
 =\left(\frac{u_f^2}{u_{f_0}^2}\right)\circ\wt J
 \quad\text{on all of }\Omega_1
\end{equation}
for every $f$. No analyticity of an unknown multiplier is used in this step.

Set
\begin{equation}\label{eq:continued_multiplier}
 \wt c=\frac{u_{f_0}^1}{u_{f_0}^2\circ\wt J}>0
 \quad\text{on }\Omega_1.
\end{equation}
Equations~\eqref{eq:continued_quotient_identity}--\eqref{eq:continued_multiplier} give
\[
 u_f^1=\wt c\,u_f^2\circ\wt J\quad\text{on }\Omega_1
\]
for every $f$. Lemma~\ref{J_is_local_dif} now applies to the continuous map $\wt J$ and proves that it is a diffeomorphism onto its image. Equivalently, differentiating \eqref{eq:continued_quotient_identity} and using \eqref{eq:quotient_differentials} shows that its differential has no kernel. On every part of $\Omega_1\cap B$, point separation implies $\wt J=J$ and $\wt c=c$. We can therefore glue the maps and multipliers to $B\cup\Omega_1$. This set is connected because $W$ is nonempty. Maximality of $B$ forces $x\in B$.

It follows that $B$ is relatively closed. Since the interior of a connected manifold with boundary is connected and $B$ is nonempty and open, $B=\Mji1$. We have constructed an injective analytic local diffeomorphism from the whole first interior into the second.

It remains to prove surjectivity. On $F(B_0)$ the inverse local map has the solution relation
\[
 u_f^2=\left(\frac1{c_0}\circ F^{-1}\right)u_f^1\circ F^{-1}.
\]
Apply the construction just proved with the two manifolds interchanged. It gives an analytic map $K:\Mji2\to\Mji1$ and a positive analytic $d$ such that
\[
 u_f^2=d\,u_f^1\circ K
\]
for every $f$. For each $x\in\Mji1$,
\[
 P_1(x)=c(x)d(J(x))P_1(K(J(x))).
\]
Point separation gives $K(J(x))=x$ and $c(x)d(J(x))=1$. The same argument on $M_2$ gives $J(K(y))=y$. Thus $J$ is surjective and $K=J^{-1}$. Uniqueness of $J$ and $c$ follows from the same separation statement. Finally, \eqref{eq:global_scaled_images} follows from \eqref{eq:global_solution_identity} by writing $y=J(x)$.
\end{proof}

\subsection{Proof of the main theorem}
\begin{proof}[Proof of Theorem~\ref{thm:main}]
Choose $x_0\in\Gamma$ and apply Lemma~\ref{lemma_determination_near_the_boundary}. Restrict its local identification to a connected nonempty interior neighborhood $B_0$. Theorem~\ref{thm:global_scaled} extends it to an analytic diffeomorphism $J:\Mji1\to\Mji2$ and a positive analytic $c$ satisfying \eqref{eq:global_solution_identity}.

Proposition~\ref{prop:operator_recovery} gives \eqref{eq:local_coefficient_recovery} on the connected interior. On the initial set $B_0$, Lemma~\ref{lemma_determination_near_the_boundary} already gives $J^*g_2=c^{p-2}g_1$. Therefore the constant in that proposition is $\kappa=1$. We obtain \eqref{eq:main_gauge} throughout the interior and the operator identity
\begin{equation}\label{eq:final_intertwining}
 L_{g_1,q_1}(c\,J^*v)=c^{p-1}J^*(L_{g_2,q_2}v)
\end{equation}
for every locally defined smooth $v$.

We next attach the observed boundary. Fix any $z\in\Gamma$ and apply Lemma~\ref{lemma_determination_near_the_boundary} at $z$, obtaining a local pair $(F_z,c_z)$. On the interior part of its domain, both this pair and $(J,c)$ satisfy the same scaled identity for all data. Proposition~\ref{prop:projective_separation} therefore gives
\[
 F_z=J,\quad c_z=c
\]
there. These local pairs extend $J$ and $c$ analytically to every point of $\Gamma$. They agree on overlaps because they already agree in the interior. Their boundary values give \eqref{eq:boundary_normalization}; their local inverses extend $J^{-1}$ to the boundary. We have thus obtained an analytic diffeomorphism $M_1^\Gamma\to M_2^\Gamma$, and \eqref{eq:main_gauge} and \eqref{eq:main_solution_relation} extend to $\Gamma$ as well.

Uniqueness for pairs satisfying the solution relation follows directly from Proposition~\ref{prop:projective_separation} and nonvanishing of $P_j$. In fact, the gauge relations and boundary normalization already force that solution relation. To see this, let $(\wt J,\wt c)$ be another pair satisfying \eqref{eq:boundary_normalization}--\eqref{eq:main_gauge}. For any $f$, covariance shows that $\wt c\,u_f^2\circ\wt J$ solves $L_{g_1,q_1}u=0$ in the interior. On $\Gamma$ it has the same Dirichlet and Neumann values as $u_f^1$: the metric identity and $\wt c=1$ imply $D\wt J(\nu_{g_1})=\nu_{g_2}$ there, and the normal derivative of $\wt c$ is zero. Equality of the local DN maps and boundary unique continuation, followed by interior unique continuation, give
\[
 u_f^1=\wt c\,u_f^2\circ\wt J
\]
on the whole interior. Point separation now identifies the two pairs. This concludes the proof.
\end{proof}

\begin{proof}[Proof of Corollary~\ref{cor:full_data}]
Take $\Gamma=\p M$ in Theorem~\ref{thm:main}, so that $M_j^\Gamma=M_j$. Conversely, a pair satisfying the stated global gauge and boundary conditions transforms each solution on $M_2$ to a solution on $M_1$ with the same full Dirichlet datum, by \eqref{eq:covariance_intro}. Uniqueness of the Dirichlet problem and the normal-derivative calculation in Proposition~\ref{hidden_conformal_invariance} give equality of the full DN maps.
\end{proof}

\appendix
\section{Runge approximation and construction of prescribed jets}\label{runge_apprx_sec}
This appendix proves the approximation and jet results used above. It is independent of the global continuation argument. We assume throughout that \(M\) is compact and connected with smooth boundary, that \(g\) and the real-valued potential \(q\) are smooth, and that the Dirichlet realization of \(L=\Delta_g+q\) is invertible. Similar arguments were used in \cite[Appendix B]{lassas2019conformal} and \cite[Appendix A]{lassas2018poisson}. Here we generalize them to the present setting.


\subsection{Local solutions with a prescribed second-order jet}
\begin{lem}\label{prescribed_sols}
Let $x\in\Mi$, let $a\in\R$, let $\xi\in T_x^*M$, and let $H$ be a symmetric covariant two-tensor at $x$ with $\tr_gH=q(x)a$. There is a smooth solution $v$ of $Lv=0$ on a neighborhood of $x$ such that
\[
 v(x)=a,\quad dv(x)=\xi,\quad\Hess_gv|_x=H.
\]
In particular, there is a local positive solution with value one at $x$, and there are local solutions with value zero and arbitrary first derivative at $x$.
\end{lem}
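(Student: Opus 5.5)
We construct $v$ as an explicit quadratic polynomial in local coordinates, corrected by a small solution of a Dirichlet problem on a tiny ball; the correction is chosen so that it does not affect the second-order jet at $x$.

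\textbf{Step 1: the quadratic ansatz.} Work in $g$-normal coordinates centred at $x$, so that $g_{ab}(0)=\delta_{ab}$ and the Christoffel symbols vanish at $0$. In these coordinates $\Hess_g w|_x=\p_a\p_b w(0)$. Set
\[
 P(y)=a+\xi_ay^a+\tfrac12H_{ab}y^ay^b .
\]
Then $P$ has the prescribed value, differential and Hessian at $x$. Moreover
\[
 LP(0)=-\tr_gH+q(x)a=0 .
\]
Since $LP$ is smooth and vanishes at the origin, $|LP(y)|\le C|y|$.

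\textbf{Step 2: the correction.} Look for $v=P+w$ on a small ball $B_\rho$, where
\[
 Lw=-LP\ \text{in }B_\rho,\qquad w|_{\p B_\rho}=0 .
\]
For $\rho$ small, $L$ with Dirichlet condition is invertible on $B_\rho$. The reason is that the Poincar\'e constant scales like $\rho^{-2}$ and dominates $\|q\|_\infty$, so the operator is coercive. The whole difficulty is to show that $w$ has vanishing $2$-jet at $0$.

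\textbf{Step 3: killing the jet of the correction.} I would avoid a delicate scaling estimate and argue as follows. First improve the error: choose a homogeneous cubic correction $P_3$, and further homogeneous terms up to any fixed order $N$, so that $L(P+P_3+\dots+P_N)=O(|y|^{N})$. This is solvable order by order. At order $k$, the new error term of degree $k-2$ must be cancelled by $-\Delta_0 P_k$, where $\Delta_0$ is the flat Laplacian. Since $\Delta_0$ maps homogeneous polynomials of degree $k$ onto those of degree $k-2$, this is always possible. These terms have degree at least $3$, so they do not change the $2$-jet. Then solve $Lw=-L(P+\dots+P_N)=:F$ with $|F|\le C|y|^N$ on $B_\rho$. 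Rescale by $y=\rho z$. The rescaled equation has uniformly elliptic coefficients, and Schauder estimates give
\[
 \|w\|_{C^{2,\alpha}(B_{\rho/2})}\lesssim \rho^{N-2}\|F\|
\]
in appropriately scaled norms. Taking $N\ge 5$ shows that $w$, $dw$ and $\Hess w$ are small at $0$, but not zero.

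Exactness is obtained by finite-dimensional linear algebra. The map from local solutions to their $2$-jets at $x$ has image contained in the hyperplane $\tr H=q(x)a$. By the estimate above, applied to the ansatz for each basis jet of this hyperplane, this map is a small perturbation of the identity on that hyperplane. Hence it is onto the hyperplane, and the prescribed jet is realized exactly by a linear combination of the constructed solutions.

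\textbf{Step 4: the two special cases.} The positive solution with value one is the case $a=1$, $\xi=0$, $H=\tfrac{q(x)}{n}g$; it is positive near $x$ by continuity. The case of value zero with arbitrary first derivative is $a=0$, $H=0$.

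\textbf{Main obstacle.} The main obstacle is the uniform elliptic estimate after rescaling in Step 3, together with the surjectivity-by-perturbation argument. The alternative would be a Cauchy–Kovalevskaya-free approach via the Taylor expansion of a solution of a non-characteristic Cauchy problem, but that would need analyticity, which is not assumed here.
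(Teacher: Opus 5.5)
Your proof is correct and is essentially the paper's argument. Both work in normal coordinates, rescale so that $L$ becomes a small perturbation of the flat Laplacian with uniform elliptic estimates, and then use a finite-dimensional perturbation argument to get exact surjectivity onto the compatible jets. The bookkeeping differs slightly. The paper uses Euclidean harmonic quadratics as boundary data and recovers the trace of the Hessian from the equation at $x$. You correct a full compatible quadratic by a zero-boundary Dirichlet solution, and this is the same object as the paper's $w^P_\varepsilon-P$.

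The higher-order corrections $P_3,\dots,P_N$ are unnecessary. Already $LP=O(|y|)$ makes the $2$-jet of the correction $O(\rho)$ after rescaling.
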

\begin{proof}
Choose $g$-normal coordinates centered at $x$, so that $x$ corresponds to $0$, $g^{ij}(0)=\delta^{ij}$, and the Christoffel symbols vanish at $0$. In these coordinates
\[
 Lv=-g^{ij}(z)\p_{ij}v+b^k(z)\p_kv+q(z)v,
 \quad b^k(0)=0.
\]
Let
\[
 \mathcal P_2=
 \left\{a'+\eta_jz^j+\tfrac12S_{ij}z^iz^j:
                 S=S^T,\ \tr S=0\right\}
\]
be the finite-dimensional space of Euclidean harmonic polynomials of degree at most two. The map
\[
 P\longmapsto\bigl(P(0),dP(0),(D^2P(0))_{\tf}\bigr)
\]
is an isomorphism from $\mathcal P_2$ onto
$E=\R\oplus(\R^n)^*\oplus\operatorname{Sym}^2_{\tf}((\R^n)^*)$.

For small $\varepsilon>0$, rescale a coordinate ball to the unit ball and define
\begin{equation}\label{eq:rescaled_L}
 L_\varepsilon w
 =-g^{ij}(\varepsilon z)\p_{ij}w
   +\varepsilon b^k(\varepsilon z)\p_kw
   +\varepsilon^2q(\varepsilon z)w.
\end{equation}
Fix $0<\alpha<1$. As an operator from
$C^{2,\alpha}(\overline{B_1})\cap\{w|_{\p B_1}=0\}$
to $C^{0,\alpha}(\overline{B_1})$, $L_\varepsilon$ converges in operator norm to the Euclidean Dirichlet operator $A_0=-\sum_i\p_{ii}$, which is invertible. The Neumann-series perturbation argument therefore gives invertibility for all sufficiently small $\varepsilon$, with a uniformly bounded inverse. In particular, for each $P\in\mathcal P_2$ there is a unique solution
\[
 L_\varepsilon w_\varepsilon^P=0\text{ in }B_1,
 \quad w_\varepsilon^P=P\text{ on }\p B_1.
\]
The correction satisfies
\[
 L_\varepsilon(w_\varepsilon^P-P)=-L_\varepsilon P,
 \quad (w_\varepsilon^P-P)|_{\p B_1}=0.
\]
Since $A_0P=0$, uniform elliptic estimates give
\[
 \norm{w_\varepsilon^P-P}_{C^{2,\alpha}(\overline{B_1})}
 \leq C\norm{(L_\varepsilon-A_0)P}_{C^{0,\alpha}(\overline{B_1})}
 \longrightarrow0.
\]
The convergence is uniform for $P$ in the unit sphere of the finite-dimensional space $\mathcal P_2$.

It follows that the linear maps
\[
 S_\varepsilon:\mathcal P_2\longrightarrow E,
 \quad S_\varepsilon P
 =\bigl(w_\varepsilon^P(0),dw_\varepsilon^P(0),
                         (D^2w_\varepsilon^P(0))_{\tf}\bigr)
\]
converge to the isomorphism given by the polynomial jets. Hence $S_\varepsilon$ is an isomorphism for all sufficiently small fixed $\varepsilon$.

Write $H_0=H-(q(0)a/n)g_0$, so $\tr H_0=0$. Fix one such $\varepsilon$ and choose $P$ with
\[
 S_\varepsilon P=(a,\varepsilon\xi,\varepsilon^2H_0).
\]
Returning to the original coordinates, set
\[
 v(z)=w_\varepsilon^P(z/\varepsilon).
\]
Then $Lv=0$ on the ball of radius $\varepsilon$, $v(0)=a$, $dv(0)=\xi$, and the trace-free part of $\Hess_gv(0)$ is $H_0$. Evaluation of \eqref{eq:rescaled_L} at $0$ gives
\[
 \tr D^2w_\varepsilon^P(0)=\varepsilon^2q(0)a,
\]
because $b(0)=0$. Therefore the full Hessian of $v$ is $H$. Smooth elliptic regularity gives a smooth solution on a neighborhood of $0$. Taking $a=1$ and then shrinking the neighborhood gives a positive solution by continuity. Taking $a=0$ allows arbitrary $\xi$.
\end{proof}

\subsection{Exact global jets at finitely many points}
\begin{proof}[Proof of Proposition~\ref{runge_prescribed_jet}]
Choose pairwise disjoint coordinate balls around $x_1,\ldots,x_N$, with their closures in the interior. By Lemma~\ref{prescribed_sols}, in each ball, after shrinking it, there is a local solution with the prescribed compatible jet. Choose smaller smooth balls $O_\ell$ whose closures lie in these solution neighborhoods, and put $O=\bigcup_\ell O_\ell$. The local solutions define a single smooth solution $v$ on a neighborhood of $\overline O$, since the components are disjoint.

The balls can be chosen so that $M\setminus\overline O$ is connected. To see this directly, take disjoint slightly larger coordinate balls in the interior. Any path joining two points outside the smaller balls can be modified inside each larger ball to avoid the corresponding smaller closed ball: the annular region is path connected in dimension at least two. Performing this modification for the finitely many balls proves the claim. A connected manifold with boundary has connected interior, by pushing paths slightly inward in a collar.

Apply Proposition~\ref{prop_runge_approximation_cauchy}. The resulting global solutions converge to $v$ in $C^2$ on smaller compact neighborhoods of all the points. Their simultaneous second-order jets therefore converge to the prescribed jets.

To replace approximation by equality, let
\[
 E_\ell=
 \{(a,\xi,H):\tr_gH=q(x_\ell)a\},
 \quad E=\prod_{\ell=1}^N E_\ell,
\]
and consider the linear map
\[
 T:\Egam\longrightarrow E,
 \quad
 Tf=\bigl(u_f(x_\ell),du_f(x_\ell),\Hess_gu_f|_{x_\ell}\bigr)_{\ell=1}^N.
\]
The equation shows that the range lies in $E$. The preceding approximation shows that this range is dense in $E$. It is a linear subspace of a finite-dimensional space, and is therefore closed. Thus $T$ is onto, proving exact realization of all the prescribed jets.
\end{proof}

\begin{rmk}
Only interior second-order jets are needed for the coefficient calculation and continuation argument. The boundary $Z$-coordinates in Lemma~\ref{lem:Z_existence} are constructed directly from boundary data and Lemma~\ref{lem:remote_normal}; no extension of the manifold or assumption about the Dirichlet spectrum on an enlarged manifold is required.
\end{rmk}

\section*{Acknowledgments}
C. I. C. is partially supported by the National Science and Technology Council (NSTC) of Taiwan under grant 113-2115-M-A49-018-MY3.

T. L. and W. T. are partly supported by the Research Council of Finland through the Centre of Excellence in Inverse Modelling and Imaging and the FAME Flagship, grants 353091 and 359208.

Y.-H. L. is partially supported by the National Science and Technology Council (NSTC) of Taiwan under grant 113-2115-M-A49-017-MY3. Y.-H. L. also acknowledges financial support from the Alexander von Humboldt Foundation through the Henriette Herz Scouting Programme during his research stay at Universit\"at Duisburg-Essen.

AI tools were used for mathematical discussions and for assistance in revising parts of the arguments and exposition.

\bibliographystyle{alpha}
\bibliography{ref}
\end{document}